\documentclass[12pt]{amsart}
\usepackage{geometry}
\geometry{left=2cm}
\geometry{right=2cm}
\geometry{top=2cm}
\geometry{bottom=2cm}

\usepackage{mathtools}

\usepackage{amssymb,amsmath,amsthm}
\usepackage[english]{babel}
\usepackage{hyperref}

\newcommand{\mf}[1]{\mathfrak #1}

\newcommand{\dkp}{\dim_{k(\mf p)}}
\newcommand{\dkq}{\dim_{k(\mf q)}}
\newcommand{\ul}{\underline}

\renewcommand{\frq}[1]{{#1}^{[p^e]}}
\newcommand{\red}[1]{{#1}_{\text{red}}}

\DeclareMathOperator{\hght}{ht}
\DeclareMathOperator{\length}{\lambda}

\DeclareMathOperator{\eh}{e}
\DeclareMathOperator{\ehk}{e_{HK}}

\DeclareMathOperator{\Max}{Max}
\DeclareMathOperator{\gr}{Gr}

\DeclareMathOperator{\Spec}{Spec}
\DeclareMathOperator{\coker}{coker}
\DeclareMathOperator{\Trace}{Tr}
\DeclareMathOperator{\disc}{D}
\DeclareMathOperator{\chr}{char}
\DeclareMathOperator{\rsig}{s_{rat}} 
\DeclareMathOperator{\csig}{s_{rel}} 

\newtheorem{theorem}{Theorem}
\newtheorem{lemma}[theorem]{Lemma}
\newtheorem{proposition}[theorem]{Proposition}
\newtheorem{corollary}[theorem]{Corollary}
\newtheorem{claim}{Claim}
\theoremstyle{definition}
\newtheorem{definition}[theorem]{Definition}

\theoremstyle{remark}
\newtheorem{remark}[theorem]{Remark}
\newtheorem{example}[theorem]{Example}

\numberwithin{theorem}{section}
\numberwithin{equation}{section}

\selectlanguage{english}

\begin{document}

\title
{On semicontinuity of multiplicities in families}

\author{Ilya Smirnov}
\address{Department of Mathematics, Stockholm University, SE-106 91, Stockholm, Sweden}
\email{smirnov@math.su.se}

\date{\today}

\begin{abstract}
This paper investigates the behavior of Hilbert--Samuel multiplicity and Hilbert--Kunz multiplicity in families of ideals. We show that Hilbert--Samuel multiplicity is upper semicontinuous and that Hilbert--Kunz multiplicity is upper semicontinuous in families of finite type. 
As a consequence, F-rational signature, an invariant defined by Hochster and Yao
as the infimum of relative Hilbert--Kunz multiplicities, is, in fact, a minimum.
This gives a different proof for its main property: F-rational signature is positive if and only if the ring is F-rational.
The tools developed in this paper can be also applied to families over $\mathbb Z$ and yield a solution to Claudia Miller's question on reduction mod $p$ of Hilbert--Kunz function. 
\end{abstract}

\maketitle

\section{Introduction}
Hilbert--Kunz multiplicity is a multiplicity theory native to positive characteristic. 
Its definition mimics the definition of Hilbert--Samuel multiplicity 
but replaces regular powers $I^n$ with Frobenius powers $\frq{I} = \{x^{p^e} \mid x \in I\}$.
The Hilbert--Kunz multiplicity of an $\mf m$-primary ideal $I$
of a local ring $(R, \mf m)$ is the limit
\[
\ehk(I) = \lim_{e \to \infty} \frac{\length_R (R/\frq{I})}{p^{e\dim R}}.
\]
It is not easy to see that the above limit exists. Existence was shown by Monsky, 
who introduced the concept in \cite{Monsky} as a continuation of earlier work of Kunz 
\cite{Kunz1, Kunz2}.

Hilbert--Kunz multiplicity is very hard to calculate and Paul Monsky was a driving force behind most of the known examples.
Several interesting families appear in literature:
plane cubics (\cite{MonskyCubic,MonskyCubic2,BuchweitzChen,Pardue}),
quadrics in characteristic two (\cite{MonskyQL,MonskyQP}),
and another family in \cite{MonskyFamily}.
The most famous of these families 
is the one appearing in \cite{MonskyQP}.

\begin{example}\label{ex Monsky}
Let $K$ be an algebraically closed field of characteristic $2$. 
For $\alpha \in K$ let $R_\alpha = K[x,y,z]/(z^4 +xyz^2 + (x^3 + y^3)z + \alpha x^2y^2)$
localized at $(x,y,z)$.
Then
\begin{enumerate}
\item $\ehk (R_\alpha) = 3 + \frac 12$, if $\alpha = 0$,
\item $\ehk (R_\alpha) = 3 + 4^{-m}$, 
if $\alpha \neq 0$ is algebraic over $\mathbb Z/2\mathbb Z$, where 
$m = [\mathbb Z/2\mathbb Z(\lambda): \mathbb Z/2\mathbb Z]$ for $\lambda$ such that $\alpha = \lambda^2 + \lambda$
\item $\ehk (R_\alpha) = 3$ if $\alpha$ is transcendental over $\mathbb Z/2\mathbb Z$.
\end{enumerate}
\end{example} 

Monsky's computations were later used by him and Brenner 
to give in \cite{BrennerMonsky} a counter-example to an outstanding problem in the field: localization of tight closure,
the problem originating already from the foundational treatise of Hochster and Huneke \cite{HochsterHuneke1}.
For this result, it is better to think about the example as a family of rings 
parametrized by $\Spec K[t]$
and the necessary phenomenon is the jump in the values between the generic fiber, 
corresponding to transcendental values, and special fibers, corresponding to algebraic values.

Another consequence of Monsky's example was found by the author in \cite{equi},
where it was shown that Hilbert--Kunz multiplicity
takes infinitely many values as a function on 
\[
\Spec K[x,y,z, t]/(z^4 +xyz^2 + (x^3 + y^3)z + tx^2y^2)
\]
by developing a technique of lifting this phenomenon from special fibers
to the corresponding maximal ideals $\mf m_\alpha = (x,y,z, t - \alpha)$.

Semicontinuity in Hilbert--Kunz theory was already studied by Kunz, who showed in \cite{Kunz2} upper semicontinuity
of individual terms of the sequence (also, see \cite{ShepherdBarron}), but the real momentum was given by Enescu and Shimomoto 
in \cite{EnescuShimomoto}, where they investigated both semicontinuity of Hilbert--Kunz multiplicity
as a function on the spectrum and in a one-parameter family. 
In both settings, they established weaker forms of semicontinuity  
\cite[Theorem~2.5, Theorem~2.6]{EnescuShimomoto}.
For the spectrum a complete solution was obtained 
by the author in \cite{semi, equi}, and the goal of this article  is to establish semicontinuity for a class of families 
similar to the situation in Example~\ref{ex Monsky} (see Definition~\ref{def affine family}).

Our definition of a family is versatile 
enough to include another outstanding  problem in the field: the behavior of 
Hilbert--Kunz multiplicity in reduction mod $p$.
For an illustration, consider the family
$\mathbb Z \to R:=\mathbb Z[x,y,z]/(z^4 +xyz^2 + (x^3 + y^3)z + x^2y^2)$.
A natural way to define the Hilbert--Kunz multiplicity of the generic fiber  
$\mathbb Q[x,y,z]/(z^4 +xyz^2 + (x^3 + y^3)z + x^2y^2)$, a ring of characteristic zero,
would be by taking the limit of Hilbert--Kunz multiplicities of special fibers 
$\lim_{p \to \infty} \ehk(R(p))$, and the question is whether the limit exists.

Hilbert--Kunz multiplicity is independent of characteristic for several classes of ``combinatorial'' rings because it only depends on the combinatorial data, for example: Stanley--Reisner rings (\cite{Conca}),
toric rings (\cite{Watanabe}), monoid algebras \cite{Eto, Bruns}, and 
binoid algebras, generalizing the previous cases (\cite{BrennerBatsukh}). 
Monsky's work provides examples where Hilbert--Kunz multiplicity depends 
on the characteristic (\cite{GesselMonsky}), but the only general case where this problem was solved is for graded rings of dimension two \cite{Trivedi, BrennerLiMiller}.
In an attempt to simplify the problem, in \cite{BrennerLiMiller} 
Claudia Miller asked
whether it is possible to replace the double limit $\lim_{p \to \infty} \ehk(R(p))$  by a single limit of the individual terms 
$\lim_{p \to \infty} \length (R(p)/\frq{\mf m}R(p))/p^{ed}$ for a fixed $e \geq 1$.
A positive answer to this question (and a more general statement)
was recently announced  by P{\' e}rez, Tucker, and Yao (\cite{PerezTuckerYao}). 
The methods of this paper provide 
an easy proof of this result in a special case (Corollary~\ref{Miller q}) and generalize 
a recent result of Trivedi (\cite{Trivedi2}) which was established 
in the graded case. However, neither this paper nor \cite{PerezTuckerYao} provide 
new cases in which $\lim_{p \to \infty} \ehk(R(p))$ is known to exist,
but rather make a step in Miller's approach. 

Another application of this work is in the theory of F-rational signature, an invariant introduced by Hochster and Yao in \cite{HochsterYao}. If $(R, \mf m)$ is a local ring, then its F-rational signature is defined by
\[
\rsig(R) = \inf_{u} \left\{ \ehk(\ul{x}) - \ehk(\ul{x}, u)\right\}
\]
where the infimum is taken over socle elements $u$
modulo a system of parameters $\ul{x}$.
Proposition~\ref{prop rat sig} proves that
if the residue field is algebraically closed, then the infimum in the definition is attained. 
This gives a fundamentally different proof of the main property of F-rational signature (\cite[Theorem~4.1]{HochsterYao}): 
$\rsig(R)$ is positive if and only if $R$ is F-rational. 

Last, we want to mention 
that using results in \cite{PerezTuckerYao}
Carvajal-Rojas, Schwede, and Tucker
\cite{CarvajalSchwedeTucker} recently obtained results in the spirit of this work.
However, their motivation is to study the behavior of Hilbert--Kunz multiplicity
in a family of varieties, while this work focuses on a family of ideals which are not necessarily maximal.
 
\subsection*{The methods and the structure of the paper}

This paper uses the uniform convergence method that was introduced by Tucker in \cite{Tucker}
to show convergence of F-signature as a limit and was later extended by the author in \cite{semi} to
show semicontinuity of Hilbert--Kunz multiplicity.
Polstra and Tucker in \cite{PolstraTucker} gave 
a more ``functorial'' approach to the uniform convergence constants
based on the discriminant technique in tight closure theory (\cite[Section~6]{HochsterHuneke1}). This approach was then applied by Polstra and the author \cite{PolstraSmirnov} to study Hilbert--Kunz 
multiplicity under small perturbations. 
The uniform convergence machinery of this paper is largely a mix of the techniques developed in \cite{PolstraSmirnov} and \cite{semi}.  Moreover, the appearing constants can be made independent of the characteristic, which gives a uniform convergence statement for fibers even if the base ring has characteristic zero (Corollary~\ref{ucon}). 
It should be noted that \cite[Proposition~4.5]{CarvajalSchwedeTucker} can be used to get a version
of Theorem~\ref{convergence thm} under stronger assumptions. 

Section~\ref{s prelim} slightly expands on \cite{PolstraTucker}
by further incorporating ideas from \cite{HochsterHuneke1}. 
Section~\ref{s HS} presents old and new results 
on the behavior of Hilbert--Samuel function in families. 
Definition~\ref{def affine family} introduces the assumptions of this work. 
The main results are presented in Section~\ref{s main}
and we finish with questions coming from this work.

\section{Discriminants and separability}\label{s prelim}

\begin{definition}
Let $R$ be a ring and $A$ a finite $R$-algebra which is a free $R$-module. If $e_1, \ldots, e_n$ are a free basis of $A$, then the discriminant of $A$ over $R$ is defined as
\[
\disc_R(A) = \det \begin{pmatrix} 
\Trace (e_1^2) & \Trace (e_1 e_2) & \cdots & \Trace (e_1e_n) \\
\Trace (e_2e_1) & \Trace (e_2^2) & \cdots & \Trace (e_2e_n)\\
\vdots & \vdots & \cdots & \vdots \\
\Trace (e_ne_1) & \Trace (e_n e_2) & \cdots & \Trace (e_n^2)
\end{pmatrix},
\]
where $\Trace (A)$ denotes the trace of the multiplication map $\times A$ on $A$.
Up to multiplication by a unit of $R$, the discriminant is independent of the choice of basis.
Discriminants are also functorial in $R$, for example, see \cite{PolstraSmirnov}. 
\end{definition}


We start with a fundamental lemma provided by Hochster and Huneke in \cite[Lemmas~6.4, 6.5]{HochsterHuneke1}.

\begin{lemma}\label{HH disc}
Let $R$ be a normal domain of characteristic $p > 0$ and $A$ be a module-finite, torsion-free, and generically separable $R$-algebra. Let $L$ be the fraction field of $R$, $L' = A \otimes_R L$, and $d = \disc_L (L')$ computed using a basis of elements in $A$. Then $0 \neq d \in R$ and $d A^{1/p} \subseteq R^{1/p}[A] \cong R^{1/p} \otimes_R A$.
\end{lemma}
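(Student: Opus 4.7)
The three assertions split naturally: $d\neq 0$, $d\in R$, and the containment $d\cdot A^{1/p}\subseteq R^{1/p}[A]$. I would verify them in that order, with essentially all the work going into the last.

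The first two are standard consequences of separability plus normality. Generic separability means $L' = A\otimes_R L$ is an \'etale $L$-algebra, i.e.\ a finite product of finite separable field extensions of $L$; the trace form of such an algebra is non-degenerate, so the Gram matrix of any $L$-basis is invertible and $d\neq 0$. For $d\in R$, the entries $\Trace_{L'/L}(e_ie_j)$ are traces of elements of $A$, which are integral over $R$ because $A$ is module-finite; their traces are sums of conjugates in a normal closure, still integral over $R$; being simultaneously in $L$ and integral over the normal ring $R$, they lie in $R$, and therefore so does the determinant $d$.

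For the main statement my plan is the classical dual-basis argument transported across the Frobenius. Let $e_1^*,\dots,e_n^*\in L'$ be the dual basis of $(e_i)$ under the non-degenerate trace pairing of $L'/L$. By Cramer's rule the matrix expressing $(e_j^*)$ in $(e_i)$ has entries of the form $(\text{cofactor})/d$, so $d\cdot e_j^*$ already lies in the $R$-span of $(e_i)$, hence in $A$. Separability of $L'/L$ gives $(L')^{1/p} \cong L'\otimes_L L^{1/p}$, so $(e_i)$ remains an $L^{1/p}$-basis of $(L')^{1/p}$, and any $\alpha\in A^{1/p}\subseteq (L')^{1/p}$ expands as
\[
\alpha \;=\; \sum_j c_j\, e_j^*, \qquad c_j \;:=\; \Trace_{(L')^{1/p}/L^{1/p}}(\alpha e_j).
\]
Multiplying by $d$ yields $d\alpha = \sum_j c_j\,(d e_j^*) \in R^{1/p}\cdot A$ as soon as each $c_j$ is shown to lie in $R^{1/p}$.

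The step I expect to be the main technical point is the Frobenius-trace compatibility
\[
c_j^{\,p} \;=\; \Trace_{L'/L}\!\bigl(\alpha^p e_j^{\,p}\bigr).
\]
This comes from the fact that Frobenius, via the isomorphism $(L')^{1/p}\cong L'\otimes_L L^{1/p}$, intertwines the two trace maps; separability is precisely what ensures $L'\otimes_L L^{1/p}$ is reduced and hence this isomorphism is available. Once the identity is in hand, $\alpha^p e_j^{\,p}\in A$, so $c_j^{\,p}$ is the trace of an element of $A$ and lies in $R$ by the second paragraph. Thus $c_j\in L^{1/p}$ satisfies $c_j^{\,p}\in R$, i.e.\ $c_j\in R^{1/p}$, which would complete the argument.
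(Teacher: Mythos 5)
Your treatment of $d\neq 0$, $d\in R$, and the containment $dA^{1/p}\subseteq R^{1/p}[A]$ is correct. The paper does not actually prove this lemma (it is quoted from Hochster--Huneke, Lemmas 6.4 and 6.5), and your dual-basis/trace argument is exactly the classical route: $de_j^*\in\sum_i Re_i\subseteq A$ by Cramer's rule since the Gram entries lie in $R$, the expansion $\alpha=\sum_j c_j e_j^*$ is legitimate because the trace form base-changes along $L\to L^{1/p}$ and $(e_i)$ stays a basis by linear disjointness of $L'$ and $L^{1/p}$ (this is where separability enters), and $c_j^p=\Trace_{L'/L}(\alpha^pe_j^p)$ holds since in characteristic $p$ the trace of the entrywise $p$-th power matrix is the $p$-th power of the trace; then $c_j^p\in L$ is integral over the normal ring $R$, so $c_j\in R^{1/p}$.

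The gap is that you never address the final clause of the statement, namely that the natural surjection $R^{1/p}\otimes_R A\to R^{1/p}[A]\subseteq A^{1/p}$ is an isomorphism. This is not a decorative remark: the paper invokes precisely this identification in Corollary~\ref{cor sequences}, where the map $\beta$ is required to land in $R^{1/p}\otimes_R A'$ rather than in $R^{1/p}[A']$. The point is not formal, because $R^{1/p}$ is flat over $R$ only when $R$ is regular (Kunz), so one cannot just tensor an embedding $A\hookrightarrow R^{n}$. When $R$ is regular the isomorphism does follow from ingredients you already have: the kernel of the surjection is $R$-torsion, since after tensoring with $L$ the map becomes $L^{1/p}\otimes_L L'\to (L')^{1/p}$, which is injective by separability, while $R^{1/p}\otimes_R A$ is torsion-free by flatness of $R^{1/p}$ and torsion-freeness of $A$. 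For merely normal $R$, however, injectivity can genuinely fail: $R^{1/p}\otimes_R A$ may contain torsion, which must die in the torsion-free module $A^{1/p}$. For instance, take $p$ odd, $R=k[[u,v,w]]/(uw-v^2)$ with $k$ perfect, and $A=k[[x,y]]$ with $u=x^2$, $v=xy$, $w=y^2$; then $A\cong R\oplus\mf p$ with $\mf p=(u,v)$, $\mf p$ is a direct summand of $R^{1/p}$, and $\mf p\otimes_R\mf p$ has nonzero torsion (it surjects onto $\mf p^2\cong\mf m$ but needs four generators). So the isomorphism clause should either be proved under the regularity hypothesis in force where the paper actually uses it (Corollary~\ref{HH seq}, with base $R^{1/p^e}$ regular), or at least explicitly justified rather than passed over; as it stands your proof covers the containment but not the identification.
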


The lemma also provides a way to define a discriminant of a non-free algebra. 
We will abuse the notation and still denote it by $\disc_R (A)$.
If $A$ is not torsion-free, we will use
the ideal
$T_R(A) = \{a \in A \mid ar = 0 \text{ for some } 0 \neq r \in R\}$.

\begin{corollary}\label{cor sequences}
Let $R$ be a normal domain and $A$ be module-finite and generically separable $R$-algebra.
Let $L$ be the fraction field of $R$, $L' = A \otimes_R L$, and 
$d = \disc_R (A)$ computed as in Lemma~\ref{HH disc}.
If $c \in R$ such that $cT_R(A) = 0$, 
then we have maps $\alpha\colon R^{1/p} \otimes_R A \to F_* A$
and $\beta \colon F_* A \to R^{1/p} \otimes_R A$
such that $cd (\coker \alpha) = 0$ and $cd (\coker \beta) = 0$.
\end{corollary}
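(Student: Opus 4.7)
The plan is to reduce to the torsion-free situation of Lemma~\ref{HH disc} and bridge between $A$ and its torsion-free quotient $A' := A/T_R(A)$ using the hypothesis $cT_R(A) = 0$.

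First, $A'$ is a module-finite, torsion-free $R$-algebra, and since $T_R(A)$ dies upon inverting any nonzero element of $R$, we have $L' = L \otimes_R A = L \otimes_R A'$. Hence $A'$ is generically separable with the same computed discriminant $d$ (a basis of $L'$ from elements of $A$ descends to one from $A'$). Lemma~\ref{HH disc} applied to $A'$ produces an $R$-linear injection $\alpha' \colon R^{1/p} \otimes_R A' \hookrightarrow F_* A'$ identified with $R^{1/p}[A'] \hookrightarrow (A')^{1/p}$, and the containment $d \cdot F_* A' \subseteq \operatorname{im} \alpha'$ says $d \cdot \coker \alpha' = 0$. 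Because $\alpha'$ is injective, the formula $\beta'(x) := (\alpha')^{-1}(dx)$ defines $\beta' \colon F_* A' \to R^{1/p} \otimes_R A'$ with $\alpha'\beta' = d \cdot \operatorname{id}$ and $\beta'\alpha' = d \cdot \operatorname{id}$; hence also $d \cdot \coker \beta' = 0$.

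The hypothesis $cT_R(A) = 0$ then makes $\tilde c(\bar a) := ca$ a well-defined $R$-linear map $\tilde c \colon A' \to A$, a one-sided inverse to the projection $\pi \colon A \to A'$ in the sense that $\pi \tilde c = c \cdot \operatorname{id}_{A'}$ and $\tilde c \pi = c \cdot \operatorname{id}_A$. I set
\[
\alpha := F_* \tilde c \circ \alpha' \circ (\operatorname{id}_{R^{1/p}} \otimes \pi), \qquad \beta := (\operatorname{id}_{R^{1/p}} \otimes \tilde c) \circ \beta' \circ F_* \pi.
\]

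The last step is a chase of images. Surjectivity of $\operatorname{id}_{R^{1/p}} \otimes \pi$ and $F_* \pi$ reduces the computation to the images of the middle pieces. Pulling $c \in R$ across the tensor product gives $(\operatorname{id}_{R^{1/p}} \otimes \tilde c)(R^{1/p} \otimes_R A') = c \cdot (R^{1/p} \otimes_R A)$, so $\operatorname{im} \beta \supseteq d \cdot c \cdot (R^{1/p} \otimes_R A) = cd \cdot (R^{1/p} \otimes_R A)$ and thus $cd \cdot \coker \beta = 0$. For $\alpha$, we have $\operatorname{im} \alpha \supseteq F_* \tilde c(d \cdot F_* A')$, which corresponds under the twisted action $r \cdot_{F_*} a = r^p a$ to the submodule $d^p \cdot cA \subseteq A = F_* A$; the target $cd \cdot F_* A$ corresponds to $(cd)^p A = c^p d^p A$, and $c^p \in cR$ gives $c^p d^p A \subseteq c d^p A$, yielding $cd \cdot \coker \alpha = 0$. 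The main obstacle is purely bookkeeping: one must carefully distinguish the external $R$-action on $F_* A$ (which scales by $r^p$) from ordinary multiplication inside $A$; once that is done, every containment falls out by unwinding definitions.
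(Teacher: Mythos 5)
Your proof is correct and follows essentially the same route as the paper's: you factor through $A' = A/T_R(A)$, use Lemma~\ref{HH disc} for $A'$ to get the multiplication map $R^{1/p}\otimes_R A' \to F_*A'$ and its $d$-inverse, and bridge back to $A$ via the map induced by multiplication by $c$, exactly as in the paper's compositions for $\alpha$ and $\beta$. The only difference is presentational: you make the Frobenius-twisted bookkeeping (the $c^p d^p$ versus $cd^p$ comparison) explicit, which the paper leaves implicit.
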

\begin{proof}
Multiplication by $c$ on $A$ induces a map $A' := A/T_R(A) \xrightarrow{\times c} A$.
Observe that $A'$ is still generically separable over $R$, 
since $L' = A \otimes_R L = A' \otimes_R L$.
Hence $d F_* A' \subseteq R^{1/p}[A'] \cong R^{1/p} \otimes_R A'$ by Lemma~\ref{HH disc}.

Now we construct the maps in the assertion as compositions:
\[
\alpha\colon R^{1/p} \otimes_R A \to R^{1/p} \otimes_R A' \to F_* A' \xrightarrow{\times F_* c} F_* A,
\]
where the first map is natural and the second map is the multiplication $F_*r \otimes a \mapsto F_* a^pr$, and
\[
\beta\colon F_* A \to F_* A' \xrightarrow{\times d} R^{1/p} \otimes_R A' 
\xrightarrow{1 \otimes \times c} R^{1/p} \otimes_R A.
\]
For the first map, we note that
$cd F_* A \subseteq d F_* A' \subseteq R^{1/p}[A']$ by Lemma~\ref{HH disc}. Because  $R^{1/p}[A']$ is the image of 
$\alpha$, $\coker \alpha$ is annihilated by $cd$.
In the second map, we note that $F_* A \to F_* A'$ is surjective, 
$R^{1/p} [A'] \subseteq F_* A'$, and $cA = cA'$, so it follows that the cokernel of $\beta$
is annihilated by $cd$.
\end{proof}

The corollary becomes especially powerful after combining it with 
another result of 
Hochster and Huneke \cite[Lemma~6.15]{HochsterHuneke1}. 

\begin{lemma}\label{HH sep}
Let $A$ be a reduced ring, module-finite over a regular ring $R$ of characteristic $p > 0$.
Then for all sufficiently large $e$, $A \otimes_R R^{1/p^e}$ is module-finite and generically separable over $R^{1/p^e}$.
\end{lemma}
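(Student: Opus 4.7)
The plan is to reduce the statement to a field-theoretic computation and exploit the separable--purely-inseparable decomposition, together with the fact that the perfect closure trivializes inseparability. Module-finiteness is immediate: if $a_1, \ldots, a_n$ generate $A$ as an $R$-module, then $a_1 \otimes 1, \ldots, a_n \otimes 1$ generate $A \otimes_R R^{1/p^e}$ as an $R^{1/p^e}$-module.

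For the generic separability, I would first reduce to the case where $R$ is a regular domain with fraction field $K$, since a Noetherian regular ring decomposes as a finite product of regular domains. The generic fiber $A \otimes_R K$ is a localization of the reduced ring $A$ and is hence itself reduced; being finite over $K$ it decomposes as $A \otimes_R K = L_1 \times \cdots \times L_r$ with each $L_i/K$ a finite field extension. Interpreting ``generically separable'' as requiring the reduction to become {\'e}tale after localizing at each generic point of $\Spec(R^{1/p^e})$, we are reduced to showing that for each $i$ and for sufficiently large $e$, $\red{L_i \otimes_K K^{1/p^e}}$ is a finite product of separable extensions of $K^{1/p^e}$. To this end, let $L_i^{\mathrm{sep}}$ be the separable closure of $K$ inside $L_i$ and let $e_i$ be such that $L_i^{p^{e_i}} \subseteq L_i^{\mathrm{sep}}$. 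For $e \geq e_i$, separability being preserved by base change, $L_i^{\mathrm{sep}} \otimes_K K^{1/p^e}$ is {\'e}tale over $K^{1/p^e}$ and splits as a product $\prod_j M_{ij}$ of finite separable extensions. Each remaining tensor $L_i \otimes_{L_i^{\mathrm{sep}}} M_{ij}$ is a purely inseparable thickening of $M_{ij}$ whose reduction identifies each purely inseparable generator $\gamma$ (satisfying $\gamma^{p^{a}} = \delta \in L_i^{\mathrm{sep}}$) with the unique $p^{a}$-th root of the image of $\delta$ in $M_{ij}$.

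The main obstacle I anticipate is verifying that the required $p^{a}$-th roots already lie in $M_{ij}$ at a finite level $e$, rather than only after passing to the perfect closure $K^{\mathrm{perf}} = \bigcup_e K^{1/p^e}$. The cleanest resolution is to observe that $\red{L_i \otimes_K K^{\mathrm{perf}}}$ is automatically {\'e}tale over $K^{\mathrm{perf}}$, since any finite reduced algebra over a perfect field is {\'e}tale. Expressing this as a filtered colimit of the $\red{L_i \otimes_K K^{1/p^e}}$ and invoking finite presentation of {\'e}tale algebras descends the {\'e}taleness to some finite $K^{1/p^e}$. Taking the maximum over the finitely many $L_i$ yields a uniform $e$, and the passage from the generic-fiber statement back to the ring $R^{1/p^e}$ is routine using Noetherianity.
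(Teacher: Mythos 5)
Your reduction is the same as the paper's: module-finiteness is immediate, and since $(A \otimes_R R^{1/p^e}) \otimes_{R^{1/p^e}} K^{1/p^e} \cong (A\otimes_R K)\otimes_K K^{1/p^e}$ with $A\otimes_R K$ a finite product of finite field extensions $L_i/K$, everything comes down to the field case --- which is exactly where the paper's proof stops. Your reading of ``generically separable'' (separable after killing the nilpotents of the generic fibre) is also the right one: as your own decomposition shows, $L_i\otimes_K K^{1/p^e}$ genuinely acquires nilpotents whenever $L_i/K$ is inseparable, so no stronger statement could hold, and the weak statement is all the paper's reduction establishes as well.

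The one step I would not accept as written is your resolution of the obstacle you yourself flag. The algebras $\red{L_i\otimes_K K^{1/p^e}}$ do form a filtered system with colimit $\red{L_i\otimes_K K^{\mathrm{perf}}}$, and the latter is \'etale over $K^{\mathrm{perf}}$; but this is \emph{not} a system of base changes of one finitely presented algebra (reduction fails to commute with base change --- that is precisely the phenomenon in play), so ``finite presentation of \'etale algebras'' does not by itself push \'etaleness down to a finite stage. It can be repaired: descend an \'etale model $C$ to some $K^{1/p^{e_1}}$, lift finitely many generators and relations to obtain, for $e\gg e_1$, a surjection $C\otimes_{K^{1/p^{e_1}}}K^{1/p^{e}} \twoheadrightarrow \red{L_i\otimes_K K^{1/p^e}}$, and use that a quotient of an \'etale algebra over a field is \'etale. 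But there is a much shorter direct argument, which is what the paper implicitly invokes: with $e \geq e_i$, the element $\delta^{1/p^a}$ is separable over $K^{1/p^a}\subseteq K^{1/p^e}$ (Frobenius twist of a separable element), hence separable over $M_{ij}$, while it is purely inseparable over $M_{ij}$ because it is the unique root of $x^{p^a}-\delta$; therefore it already lies in $M_{ij}$, which removes your obstacle at once. Equivalently, every residue field of $L_i\otimes_K K^{1/p^e}$ is generated over $K^{1/p^e}$ by $p^e$-th roots of elements separable over $K$, hence is separable. Finally, note that once \'etaleness of the reduction holds at one $e$ it persists for all larger $e'$, since $\red{L_i\otimes_K K^{1/p^{e'}}}$ is a quotient of $\red{L_i\otimes_K K^{1/p^{e}}}\otimes_{K^{1/p^e}}K^{1/p^{e'}}$; this gives the ``for all sufficiently large $e$'' in the statement.
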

\begin{proof}
Let $L$ be the fraction field of $R$ and  $L' = A \otimes_R L$. Since $A$ is reduced, $L'$ is a product of fields. 
Tensoring with $L$ we get that  
\[
A \otimes_R R^{1/p^e} \otimes_R L = (A \otimes_R L) \otimes_L (R^{1/p^e} \otimes_R L)
= L' \otimes_L L^{1/p^e}.
\]
Hence the statement is reduced to the field case.
\end{proof}

\begin{corollary}\label{HH seq}
Let $A$ be a reduced ring, module-finite over a regular ring $R$ of characteristic $p > 0$.
Let $c \in R$ such that there exists a free $R$-module $F\subseteq A$
such that $cA \subseteq F$.  
Then for large $e$ we have exact sequences of $A$-modules
\[
R^{1/p^{e + 1}} \otimes_R A \to F_* (A \otimes_R R^{1/p^e}) \to C_{1, e} \to 0
\]
and 
\[
F_* (A \otimes_R R^{1/p^e}) \to R^{1/p^{e + 1}} \otimes_R A \to C_{2,e} \to 0,
\]
where the cokernels are annihilated by $c\disc_{R^{1/p^e}} (A \otimes_R R^{1/p^e})$.
\end{corollary}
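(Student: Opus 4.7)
The plan is to reduce the statement directly to Corollary~\ref{cor sequences} applied over the base $R^{1/p^e}$, with the algebra $A \otimes_R R^{1/p^e}$ in place of $A$. First, Lemma~\ref{HH sep} tells us that for all sufficiently large $e$, the algebra $A \otimes_R R^{1/p^e}$ is module-finite and generically separable over $R^{1/p^e}$. Since $R$ is regular, so is $R^{1/p^e}$ (it is isomorphic to $R$ via Frobenius), in particular a normal domain, so the hypotheses of Corollary~\ref{cor sequences} are met.

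Next, I would check that the same $c$ annihilates the $R^{1/p^e}$-torsion of $A \otimes_R R^{1/p^e}$. Pick $t \in T_R(A)$: then $rt = 0$ for some $0 \neq r \in R$ and $ct \in F$, hence $ct = 0$ since $F$ is torsion-free. Thus $cT_R(A) = 0$, and $cA \subseteq F$ with $F$ free. Because $R$ is regular, Kunz's theorem gives that $R^{1/p^e}$ is flat over $R$, so tensoring preserves the inclusion $cA \subseteq F$, yielding $c(A \otimes_R R^{1/p^e}) \subseteq F \otimes_R R^{1/p^e}$. The right-hand side is a free $R^{1/p^e}$-module, so by the same argument $c$ annihilates $T_{R^{1/p^e}}(A \otimes_R R^{1/p^e})$.

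Now Corollary~\ref{cor sequences}, applied with base ring $R^{1/p^e}$, algebra $A \otimes_R R^{1/p^e}$, and torsion-killer $c$, produces maps
\[
\alpha\colon (R^{1/p^e})^{1/p} \otimes_{R^{1/p^e}} (A \otimes_R R^{1/p^e}) \to F_*(A \otimes_R R^{1/p^e})
\]
and
\[
\beta\colon F_*(A \otimes_R R^{1/p^e}) \to (R^{1/p^e})^{1/p} \otimes_{R^{1/p^e}} (A \otimes_R R^{1/p^e})
\]
whose cokernels are annihilated by $c \disc_{R^{1/p^e}}(A \otimes_R R^{1/p^e})$. Finally, the natural identifications $(R^{1/p^e})^{1/p} = R^{1/p^{e+1}}$ and
\[
R^{1/p^{e+1}} \otimes_{R^{1/p^e}} (A \otimes_R R^{1/p^e}) \cong R^{1/p^{e+1}} \otimes_R A
\]
rewrite $\alpha$ and $\beta$ in the required form, giving the two exact sequences of the statement.

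The argument is essentially a reduction, so no step is genuinely difficult; the only point that deserves care is the verification that the single constant $c$, chosen at the level of $R$, still kills the torsion after base change to $R^{1/p^e}$. This is what makes the hypothesis "$cA \subseteq F$ for a free $F$" (rather than simply "$c T_R(A) = 0$") both natural and necessary: it is the flatness of $R \to R^{1/p^e}$ applied to a free submodule, not to the torsion submodule itself, that transports the annihilation across the base change.
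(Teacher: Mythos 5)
Your proposal is correct and follows essentially the same route as the paper: choose $e$ large via Lemma~\ref{HH sep}, use Kunz's flatness of $R \to R^{1/p^e}$ to transport $cA \subseteq F$ to $cA' \subseteq F'$ with $F'$ free (hence $cT_{R^{1/p^e}}(A') = 0$ since $F'$ is torsion-free), and then apply Corollary~\ref{cor sequences} over the base $R^{1/p^e}$, identifying $(R^{1/p^e})^{1/p} \otimes_{R^{1/p^e}} A' \cong R^{1/p^{e+1}} \otimes_R A$. Your explicit verification of the torsion-killing step and of the identifications is just a more detailed rendering of the paper's argument.
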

\begin{proof}
We take $e$ large enough to satisfy Lemma~\ref{HH sep}.
Let $A' = A \otimes_R R^{1/p^e}$, $R' = R^{1/p^e}$, and $F' = F \otimes_R R^{1/p^e}$.
Because $R^{1/p^e}$ is flat by \cite{Kunz1}, 
$c A' \subseteq F'$, so
$cT_{R'}(A') \subseteq cA' \subseteq F'$ and $cT_{R'}(A') = 0$
because $F'$ is torsion-free. Now, we may use Corollary~\ref{cor sequences}
for $A'$ and $R'$.
\end{proof}

\section{Families and semicontinuity}\label{s HS}

We adopt the following notion of a family from \cite{Lipman}. 
Let $R$ be a ring, $A$ be an $R$-algebra, and $I \subset A$ be an ideal such that $A/I$ is a finitely generated $R$-module. 
For any prime ideal $\mf p \in \Spec R$ define $A(\mf p) := A \otimes_R k(\mf p)$ and $I(\mf p) = IA(\mf p) := I(A(\mf p))$. By the assumption, $A(\mf p)/I(\mf p) = A/I \otimes_R k(\mf p)$ has finite length. 
Thus, $I(\mf p)$ is a family of finite colength ideals in a family of rings $A(\mf p)$  parametrized by $\Spec R$.
If $M$ is a finite $A$-module, then
$M(\mf p):= M \otimes_R k(\mf p)$ is a finite $A(\mf p)$-module for all $\mf p \in \Spec R$.

Hilbert--Kunz multiplicity (and Hilbert--Samuel multiplicity) is now 
a real-valued function on $\Spec R$ via $\mf p \mapsto \ehk(I(\mf p), A(\mf p))$.
An example of such function is given in Example~\ref{ex Monsky} by 
a family $K[t] \to K[x,y,z, t]/(z^4 +xyz^2 + (x^3 + y^3)z + tx^2y^2)$
with $I = (x,y,z)$.

We also fix the following definition of semicontinuity. 
\begin{definition}\label{def semi}
Let $X$ be a topological space and $(\Lambda, \prec)$ be a partially ordered set.
We say that a function $f \colon X \to \Lambda$
is {\it upper semicontinuous} if for each $\lambda \in \Lambda$
the set
\[
X_{\prec \lambda} = \{x \in X \mid f(x) \prec \lambda\}
\]
is open. 
\end{definition}

In the literature, one can find an alternative definition of semicontinuity that instead requires
the sets $X_{\preceq \lambda} = \{x \in X \mid f(x) \preceq \lambda\}$ to be open.
This definition is stronger than Definition~\ref{def semi} but coincides if $f$ is discretely valued.
As it was observed by Enescu and Shimomoto (\cite[Theorem~2.7]{EnescuShimomoto}),
Monsky's example shows that Hilbert--Kunz multiplicity
is not an upper semicontinuous function in this, stronger sense (take $\lambda =3$).

\begin{remark}\label{rem Nagata}
Nagata's criterion of openness (\cite[22.B]{Matsumura})
is often used to show that a function is semicontinuous.
Namely, if $R$ is Noetherian, then a function $f \colon \Spec R \to \Lambda$
is upper semicontinuous if and only if the following two conditions hold:
\begin{enumerate}
\item if $\mf p \subset \mf q$ then $f(\mf p) \preceq f(\mf q)$, 
\item if $f(\mf p) \prec \lambda$ then there exists an elements 
$s \notin \mf p$ such that for every $\mf q$ with $s \notin \mf q$ and $\mf p \subseteq \mf q$ 
we have $f(\mf q) \prec \lambda$.
\end{enumerate}
\end{remark}

\subsection{Hilbert--Samuel function in families}
The theory of families of ideals originates from the work of Teissier 
(\cite{Teissier}) on the principle of specialization of integral closure 
and was further developed by Lipman in \cite{Lipman}.

We start with a lemma found in the proof of \cite[Proposition~4.2]{FlennerManaresi}.

\begin{lemma}\label{iso}
Let $R \to A$ be a map of Noetherian rings and $I$ be an ideal of $A$ such that $R \to A/I$ is finite. 
Suppose $M$ is a finite $A$-module. 
If $\gr_I(M)$ is flat over $R$, then for every finite $R$-module $N$ the canonical map
\[
\gr_I(M) \otimes_R N \to \gr_I (M \otimes_R N)
\] 
is an $A$-isomorphism.
\end{lemma}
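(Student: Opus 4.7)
The plan is to reduce the graded isomorphism, degree by degree, to the ungraded statement that for every $n \geq 0$ the canonical surjection $I^n M \otimes_R N \twoheadrightarrow I^n(M \otimes_R N)$ is an isomorphism. Granted this, applying the right exact functor $-\otimes_R N$ to the short exact sequence $0 \to I^{n+1}M \to I^n M \to G_n \to 0$, with $G_n := I^n M / I^{n+1} M$, identifies $G_n \otimes_R N$ with $I^n(M \otimes_R N)/I^{n+1}(M \otimes_R N)$, which is the degree-$n$ component of $\gr_I(M \otimes_R N)$. Summing over $n$ produces the map in the statement, and its $A$-linearity is automatic since $A$ acts always through the left tensor factor.

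The surjectivity of $I^n M \otimes_R N \to I^n(M \otimes_R N)$ is clear from the definition of the $A$-action: $I^n(M \otimes_R N)$ is generated by elements of the form $(i_1 \cdots i_n m) \otimes n'$. For injectivity, I would first prove the stronger injection $I^n M \otimes_R N \hookrightarrow M \otimes_R N$, since then the surjection above must be injective as well. This in turn reduces to proving flatness of $M/I^n M$ over $R$ for every $n \geq 1$, for then the Tor sequence associated to $0 \to I^n M \to M \to M/I^n M \to 0$ forces $\operatorname{Tor}_1^R(M/I^n M, N) = 0$, giving exactly the desired injection.

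The flatness of $M/I^n M$ can be handled by induction on $n$. The hypothesis that $\gr_I(M) = \bigoplus_k G_k$ is flat over $R$ passes to each summand $G_k$, so every graded piece is flat over $R$. The base case $n = 1$ is $M/IM = G_0$. For the inductive step, the short exact sequence $0 \to G_n \to M/I^{n+1}M \to M/I^n M \to 0$ has two flat outer terms, so the middle term is flat by the stability of flatness under extensions. This is essentially all there is; no single step looks genuinely difficult, and the main point of care is to keep the bookkeeping straight between the graded and the ungraded pictures, particularly when identifying the cokernel computed by right exactness with the corresponding graded piece of $\gr_I(M \otimes_R N)$.
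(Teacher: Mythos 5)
Your proof is correct and follows essentially the same route as the paper: both reduce the statement to showing that $I^nM \otimes_R N \to I^n(M \otimes_R N)$ is an isomorphism for every $n$, and both exploit that each graded piece $I^nM/I^{n+1}M$ is flat as a direct summand of $\gr_I(M)$. The only difference is bookkeeping: the paper inducts directly on the injectivity of $I^nM \otimes_R N \to M \otimes_R N$ via the sequences $0 \to I^{n+1}M \otimes_R N \to I^nM \otimes_R N \to (I^nM/I^{n+1}M) \otimes_R N \to 0$, while you first prove flatness of each $M/I^nM$ by extension-closure and then obtain the same injectivity from $\operatorname{Tor}_1^R(M/I^nM, N) = 0$.
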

\begin{proof}
It is sufficient to show that  for all $n$ the natural map $I^nM \otimes_R N \to I^n(M \otimes_R N)$
is an $A$-isomorphism. 
Because $R$ acts on $M \otimes_R N$ by multiplication on $M$, the map is surjective, 
so it remains to check injectivity. 

Because $I^nM/I^{n+1}M$ is a flat $R$-module as a direct summand of $\gr_I (M)$, there is an exact sequence
\[
0 \to I^{n+1}M \otimes_R N \to I^nM \otimes_R N \to (I^nM/I^{n+1}M) \otimes_R N \to 0.
\]
Using induction on $n$ it is now easy to verify the natural maps $I^nM \otimes_R N \to I^n (M \otimes_R N)$
are injective. 
\end{proof}

Using this lemma we are able to expand \cite[Proposition~3.1]{Lipman}.

\begin{theorem}\label{t HS}
Let $R \to A$ be a map of Noetherian rings and $I$ be an ideal in $A$ such that $A/I$ is a finite $R$-module.
Let $M$ be a finitely generated $A$-module.
Then the following functions on $\Spec R$ are upper semicontinuous:
\begin{enumerate}
\item $\mf p \mapsto \dkp M(\mf p)/I^nM(\mf p)$ for any $n$,
\item $\mf p \mapsto \left (\dkp  M(\mf p)/IM(\mf p), \dkp M(\mf p)/I^2M(\mf p), \ldots \right)$ (with lex-order),
\end{enumerate}

\end{theorem}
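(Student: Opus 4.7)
The approach is to derive (1) directly from the classical upper semicontinuity of fiber dimensions of finitely generated modules, and then bootstrap (2) via Nagata's criterion (Remark~\ref{rem Nagata}) combined with generic freeness.

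For (1), I would first check that $M/I^n M$ is a finitely generated $R$-module. Since $A/I$ is finite over $R$ by hypothesis, a routine induction on the exact sequences
\[
0 \to I^k/I^{k+1} \to A/I^{k+1} \to A/I^k \to 0
\]
(where $I^k/I^{k+1}$ is a finitely generated $A/I$-module and hence finite over $R$) shows that every $A/I^n$ is finite over $R$; since $M$ is finite over $A$, $M/I^n M$ is consequently finite over $R$. Then
\[
\dkp M(\mf p)/I^n M(\mf p) \;=\; \dkp (M/I^n M) \otimes_R k(\mf p)
\]
is the fiber dimension of a finitely generated $R$-module, which is the standard upper semicontinuous invariant: by Nakayama it equals the minimum number of generators of $(M/I^nM)_{\mf p}$, and lifting those generators produces a neighborhood of $\mf p$ on which the function cannot be larger.

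For (2), I would verify the two hypotheses of Nagata's criterion. The monotonicity condition follows immediately from (1) applied coordinate-wise: if $\mf p \subseteq \mf q$, then $f_n(\mf p) \le f_n(\mf q)$ for every $n$, hence $f(\mf p) \preceq f(\mf q)$ in lex order. For the local openness condition, suppose $f(\mf p) \prec \lambda$, and let $k$ be the smallest index at which the sequences differ, so $f_i(\mf p) = \lambda_i$ for $i < k$ and $f_k(\mf p) < \lambda_k$. The crucial observation is that only finitely many coordinates, namely $f_1, \ldots, f_k$, now matter. Working on the Noetherian integral domain $R/\mf p$, I would apply generic freeness to each finitely generated module $(M/I^iM) \otimes_R R/\mf p$ for $i = 1, \ldots, k$: for each $i$ there is a nonzero $\bar s_i \in R/\mf p$ such that after inverting $\bar s_i$ this module becomes free of rank equal to its generic value $f_i(\mf p)$. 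Lifting $s = \prod_{i \le k} s_i$ to $R$, we have $s \notin \mf p$, and for every $\mf q \supseteq \mf p$ with $s \notin \mf q$ the fiber dimensions satisfy $f_i(\mf q) = f_i(\mf p)$ for all $i \le k$. Hence $f_i(\mf q) = \lambda_i$ for $i < k$ and $f_k(\mf q) < \lambda_k$, giving $f(\mf q) \prec \lambda$ as required.

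The only real conceptual obstacle is that upper semicontinuity of each $f_n$ individually does not, by itself, give control on a lex-lower neighborhood: matching the earlier coordinates $f_i(\mf q) = \lambda_i$ for $i < k$ requires \emph{equality} along specialization, whereas semicontinuity alone supplies only the inequality $f_i(\mf q) \ge f_i(\mf p)$. What saves the argument is the finiteness of the cutoff $k$, which allows generic freeness on $V(\mf p)$ to produce the required equalities simultaneously on a common dense open of $V(\mf p)$ and therefore, via Nagata, on an open neighborhood of $\mf p$ in $\Spec R$.
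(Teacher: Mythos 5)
Your proof is correct, and for part (2) it takes a genuinely different route from the paper. Part (1) coincides with the paper's argument (finiteness of $M/I^nM$ over $R$, then semicontinuity of the number of generators). For part (2), the paper applies generic freeness to the whole associated graded module $\gr_I(M/\mf p M)$, viewed as a finitely generated module over a finitely generated $R$-algebra, and then uses the base-change isomorphism of Lemma~\ref{iso}, $\gr_I(M/\mf pM)\otimes_R k(\mf q) \cong \gr_I(M\otimes_R k(\mf q))$, to conclude that \emph{all} the numbers $\dkq M(\mf q)/I^nM(\mf q)$ are simultaneously constant on a single set $V(\mf p)\cap D(s)$. You instead exploit the lex order to truncate at the first index $k$ where $f(\mf p)$ and $\lambda$ differ, and then apply generic freeness of finitely generated modules over the domain $R/\mf p$ to the finitely many modules $(M/I^iM)\otimes_R R/\mf p$ with $i\le k$; this correctly produces, for each $\mf q\in V(\mf p)\cap D(s)$, the equalities $f_i(\mf q)=f_i(\mf p)$ for $i\le k$, which is all Nagata's criterion needs. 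Your argument is more elementary, bypassing Lemma~\ref{iso} and the graded structure entirely. What the paper's argument buys in exchange is the stronger local statement: a single open neighborhood of $\mf p$ inside $V(\mf p)$, independent of $\lambda$, on which every coordinate is constant; it is this stronger form of the proof (rather than the literal statement of the theorem) that is implicitly used later, for instance to see that the infinite intersection $U(C)$ in Corollary~\ref{HS bound} is open, whereas your neighborhood depends on $\lambda$ through the cutoff $k$ and would not directly give that consequence.
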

\begin{proof}
It can be shown by induction that, for all $n$, the modules
$M/I^nM$ and $I^nM/I^{n+1}M$ are finitely generated $R$-modules.
Observe that $M(\mf p)/I^nM(\mf p) \cong R/I^n \otimes_R M(\mf p) \cong M/I^nM \otimes_R k(\mf p)$.
But for any finite $R$-module $N$, $\dim_{k(\mf p)} N \otimes_R k(\mf p)$ is the minimal 
number of generators of $N(\mf p)$, which is clearly an upper semicontinuous function, see for example \cite[Lemma~2.2]{PolstraTucker}.
In particular, we obtain that the first condition of Nagata's criterion from
Remark~\ref{rem Nagata} is satisfied.

For the second condition, we provide a neighborhood of $\mf p$ where the functions are constant.
Observe that $\gr_I(M)$ is a finitely generated module over a finitely generated $R$-algebra, because it is a finite $\gr_I(A)$-module and $\gr_I(A)$ is a finitely generated module over $A/I[x_1, \ldots, x_N]$ where $x_1, \ldots, x_N$ are homogeneous generators of $\gr_I(A)$ of degree one.
For a fixed prime ideal $\mf p \in \Spec R$, we may apply generic freeness (\cite[22.A]{Matsumura}) over $R/\mf p$ for the module $\gr_I (M/\mf pM)$. 

In the resulting neighborhood $D(s)$ where $\gr_I(M/\mf pM)$ is free, by Lemma~\ref{iso} 
and flatness of localization, 
for all $\mf q \in D(s) \cap V(\mf p)$ we have the isomorphism
\[
\gr_I(M/\mf pM) \otimes_R k(\mf q) \cong \gr_I (M \otimes_R k(\mf q)).
\]
Because each $(I^n + \mf p)M/(I^{n+1} + \mf p)M$ is projective, it follows that 
$\dkq I^nM(\mf q)/I^{n+1}M(\mf q)$ is constant on $V(\mf p) \cap D(s)$ for all $n$.
\end{proof}

\begin{corollary}[{\cite[Proposition~3.1]{Lipman}}]
Let $R \to A$ be a map of Noetherian rings and $I \subset A$ be an ideal such that $A/I$ is a finite $R$-module.
If $\mf p \subseteq \mf q \subset R$ are prime ideals 
and $M$ is a finitely generated $A$-module, then 
$\dim M(\mf p) \leq  \dim M(\mf q)$ 
and if $\dim M(\mf p) = \dim M(\mf q)$ then 
$\eh(IM(\mf p)) \leq \eh(IM(\mf q))$. 
\end{corollary}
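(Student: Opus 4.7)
The plan is to derive both statements from Theorem~\ref{t HS}(1). By upper semicontinuity of $\mf p \mapsto \dkp M(\mf p)/I^n M(\mf p)$ (equivalently, from the specialization step verified in its proof), for every $n \geq 1$ and every pair of primes $\mf p \subseteq \mf q$ we have
\[
H_{\mf p}(n) := \dkp M(\mf p)/I^n M(\mf p) \;\leq\; \dkq M(\mf q)/I^n M(\mf q) =: H_{\mf q}(n).
\]

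Next I would invoke the standard fact that for $n \gg 0$ the functions $H_{\mf p}$ and $H_{\mf q}$ agree with polynomials of degrees $d_{\mf p} := \dim M(\mf p)$ and $d_{\mf q} := \dim M(\mf q)$, with leading coefficients $\eh(IM(\mf p))/d_{\mf p}!$ and $\eh(IM(\mf q))/d_{\mf q}!$ respectively. This is the usual content of Hilbert--Samuel theory applied to the module $M(\mf p)$ with respect to the ideal $IM(\mf p)$, whose quotient $M(\mf p)/IM(\mf p)$ has finite $k(\mf p)$-length because it is the base change of the finite $R$-module $M/IM$.

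Asymptotic comparison then finishes the argument. If $d_{\mf p} > d_{\mf q}$, the polynomial $H_{\mf p}$ would eventually dominate $H_{\mf q}$, contradicting the pointwise inequality; hence $d_{\mf p} \leq d_{\mf q}$. When $d_{\mf p} = d_{\mf q}$, the same pointwise bound between polynomials of equal degree forces the inequality of leading coefficients, which after clearing $d_{\mf p}!$ yields $\eh(IM(\mf p)) \leq \eh(IM(\mf q))$. There is no real obstacle: all of the substantive work is already packaged in Theorem~\ref{t HS}, and the remaining step is only to read off the asymptotic information encoded by the Hilbert--Samuel functions.
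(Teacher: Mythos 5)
Your argument is correct and is exactly the derivation the paper intends: the corollary is stated as an immediate consequence of Theorem~\ref{t HS}, whose proof already records the specialization inequality $\dkp M(\mf p)/I^nM(\mf p) \leq \dkq M(\mf q)/I^nM(\mf q)$ for $\mf p \subseteq \mf q$ (the first Nagata condition), and the passage to dimensions and leading coefficients of the Hilbert--Samuel polynomials is the intended remaining step. The only point to state explicitly is that $\dim M(\mf p)$ is to be read as the degree of the Hilbert--Samuel polynomial of $I(\mf p)$ on $M(\mf p)$, i.e.\ the maximal local dimension of $M(\mf p)$ at points of $V(I(\mf p)) \cap \operatorname{Supp} M(\mf p)$, which is the convention under which your asymptotic comparison gives precisely the stated inequalities.
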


\begin{corollary}\label{HS bound}
Let $R \to A$ be a map of Noetherian rings and $I$ be an ideal such 
that $A/I$ is a finite $R$-module. 
Let $d = \max_{\mf m \in \Max R} \dim A/\mf mA$.
Then there exists a constant $C$ such that
for all $\mf p \in \Spec R$ and all $n$
\[
\dkp A(\mf p)/I^n(\mf p) < C n^d.
\]
\end{corollary}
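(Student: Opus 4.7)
The plan is to combine a pointwise polynomial bound, valid because each fiber has dimension at most $d$, with the observation that only finitely many Hilbert--Samuel functions occur among the fibers.

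For the pointwise bound, I would note that by the preceding corollary (Lipman's Proposition~3.1) applied to any maximal ideal $\mf m \supseteq \mf p$, we have $\dim A(\mf p) \leq \dim A(\mf m) \leq d$. Since $A(\mf p)$ is Noetherian, $I(\mf p)$ has finite colength, and the composition factors of the Artinian module $A(\mf p)/I(\mf p)^n$ all have finite $k(\mf p)$-dimension (bounded by $\dim_{k(\mf p)} A(\mf p)/I(\mf p)$), standard Hilbert--Samuel theory yields a prime-dependent constant $C_\mf p$ with
\[
\dim_{k(\mf p)} A(\mf p)/I^n(\mf p) < C_\mf p n^d \qquad \text{for all } n \geq 1.
\]

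To upgrade to a uniform constant, I would argue that only finitely many Hilbert--Samuel functions $n \mapsto \dim_{k(\mf p)} A(\mf p)/I^n(\mf p)$ arise as $\mf p$ varies over $\Spec R$. This is essentially contained in the proof of Theorem~\ref{t HS}(2): for each $\mf p$ generic freeness of $\gr_I(A/\mf p A)$ over $R/\mf p$ produces an $s \notin \mf p$ such that the entire sequence $(\dim_{k(\mf q)} A(\mf q)/I^n(\mf q))_{n \geq 1}$ is constant on $D(s) \cap V(\mf p)$. A standard Noetherian induction on closed subsets of $\Spec R$ then finishes: a hypothetical minimal closed subset supporting infinitely many sequences must be irreducible (a finite union of finite-sequence subsets is finite-sequence), say $V(\mf p)$, which then splits as $(D(s) \cap V(\mf p)) \cup V(\mf p + (s))$ with one sequence on the open piece and, by minimality, finitely many on the strictly smaller closed piece---a contradiction. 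Taking $C$ to be the maximum of $C_\mf p$ over one representative prime per occurring sequence yields the claimed uniform bound. The only mildly delicate point is packaging the Noetherian induction cleanly; all the substantive input is already in Theorem~\ref{t HS}(2) and the Lipman corollary preceding the statement.
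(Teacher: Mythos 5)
Your proof is correct and is essentially the paper's own argument: a pointwise constant obtained from the fiber-dimension bound $\dim A(\mf p) \leq \dim A(\mf m) \leq d$, combined with a Noetherian induction on closed subsets of $\Spec R$ powered by the generic-freeness local constancy from the proof of Theorem~\ref{t HS}. The only difference is packaging: the paper runs the induction on the open sets $U(C) = \{\mf p \mid \dkp A(\mf p)/I^n(\mf p) < C n^d \text{ for all } n\}$, increasing $C$ at generic points of the closed complement, whereas you prove the slightly stronger intermediate statement that only finitely many fiberwise Hilbert--Samuel functions occur and then take the maximum of the corresponding constants; both versions ultimately rest on the same constancy of the whole sequence on $D(s) \cap V(\mf p)$, which is also what justifies the paper's claim that the infinite intersection defining $U(C)$ is open.
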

\begin{proof}
First, note that if $\mf p \subseteq \mf m$ then
$\dim A/\mf mA = \dim A(\mf m) \geq \dim A(\mf p).$
So, for every $\mf p$, there is some constant $C(\mf p)$ that will work for all $n$. 
Given any $C$ the set 
\[
U(C) = \{\mf p \mid \dkp A(\mf p)/I^n(\mf p) < C n^d \text{ for all }n\}
= \cap_{n = 1}^\infty \{\mf p \mid \dkp A(\mf p)/I^n(\mf p) < C n^d\}
\]
is open by Theorem~\ref{t HS}. Thus we can build $C$ by Noetherian induction:
we first choose $C$ to be the maximum $C(\mf p)$ over the generic points
and then keep increasing $C$ by considering generic points of the complement 
of $U(C)$ until $U(C) = \Spec R$. 
\end{proof}

The following result of Lipman (\cite[Proposition~3.3]{Lipman})
provides a natural sufficient condition for equidimensionality of a family. 

\begin{lemma}\label{equi}
Let $R \to A$ be a map of Noetherian rings and $I$ an ideal of $A$ such that $A/I$ is a finite $R$-module
and $R \cap I = 0$.
Furthermore, assume that
\begin{enumerate}
\item $\hght \mf q + \dim A/\mf q = \dim A$ for every prime ideal $\mf q \supseteq I$ in $A$,
\item $\dim A/\mf mA + \dim R = \dim A$ for every maximal ideal $\mf m$ of $R$.
\end{enumerate} 
Then for every prime ideal $\mf p$ of $R$ we have $\dim A(\mf p) = \dim A - \dim R = \hght I$.
\end{lemma}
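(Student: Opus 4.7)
The plan is to prove the two equalities separately: first the global equality $\hght I = \dim A - \dim R$, and then the fibrewise equality $\dim A(\mf p) = \hght I$.

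For the first equality, the hypothesis $R \cap I = 0$ gives an injection $R \hookrightarrow A/I$, and since $A/I$ is module-finite over $R$ this is an integral extension; in particular $\dim A/I = \dim R$. I would pick a minimal prime $\mf q$ of $I$ realising $\dim A/\mf q = \dim A/I = \dim R$; hypothesis (1) then yields $\hght \mf q = \dim A - \dim R$. For any other minimal prime $\mf q'$ of $I$, $A/\mf q'$ is still integral over $R$ (being a quotient of $A/I$), so $\dim A/\mf q' \leq \dim R$, and (1) forces $\hght \mf q' \geq \dim A - \dim R$. Hence $\hght I = \min_{\mf q'} \hght \mf q' = \dim A - \dim R$.

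For the lower bound $\dim A(\mf p) \geq \hght I$, I would use going-up for the integral extension $R \hookrightarrow A/I$ to produce a prime $\mf Q \supseteq I$ of $A$ with $\mf Q \cap R = \mf p$. Integrality of $A/\mf Q$ over $R/\mf p$ gives $\dim A/\mf Q = \dim R/\mf p$, and hypothesis (1) yields $\hght \mf Q = \dim A - \dim R/\mf p$. Combining the always-valid dimension inequality $\hght \mf Q \leq \hght \mf p + \dim A(\mf p)_\mf Q$ (coming from the local morphism $R_\mf p \to A_\mf Q$) with the always-valid bound $\hght \mf p + \dim R/\mf p \leq \dim R$ (which follows by concatenating saturated chains through $\mf p$) gives
\[
\dim A(\mf p) \geq \dim A(\mf p)_\mf Q \geq \hght \mf Q - \hght \mf p \geq (\dim A - \dim R/\mf p) - (\dim R - \dim R/\mf p) = \hght I.
\]

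For the upper bound $\dim A(\mf p) \leq \hght I$, I would take a chain $\mf Q_0 \subsetneq \cdots \subsetneq \mf Q_r$ in $A(\mf p)$ and try to enlarge it so that its top $\mf Q_r$ contains $I$. Note that $A(\mf p)/IA(\mf p) = (A/I) \otimes_R k(\mf p)$ is a nonzero Artinian $k(\mf p)$-algebra (nonzero because $(A/I)_\mf p$ is a nonzero finite $R_\mf p$-module, so Nakayama applies), which is what should drive the extension. Once $\mf Q_r \supseteq I$, hypothesis (1) gives $\hght \mf Q_r = \dim A - \dim R/\mf p$, and the chain length $r$ should be extracted as $r \leq \hght I$ by combining with the catenary behaviour of $R$ around $\mf p$. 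The main obstacle will be this extension step: a prime $\mf Q$ of $A(\mf p)$ need not sit under a maximal ideal that contains $IA(\mf p)$ (the sum $\mf Q + IA(\mf p)$ may be the unit ideal of $A(\mf p)$), so the argument has to use hypotheses (1) and (2) more delicately, presumably by routing through a specialisation to a maximal ideal $\mf m \supseteq \mf p$ where condition (2) gives $\dim A/\mf m A = \hght I$ directly, and transferring this control back down to $\mf p$ via the integral extension $R \hookrightarrow A/I$.
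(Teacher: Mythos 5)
Your first two steps are correct: the equality $\hght I = \dim A - \dim R$ does follow from the integral extension $R \hookrightarrow A/I$ (using $R \cap I = 0$ and finiteness of $A/I$) combined with hypothesis (1), and the lower bound $\dim A(\mf p) \geq \hght I$ via lying over, hypothesis (1), and the standard inequality $\dim A_{\mf Q} \leq \dim R_{\mf p} + \dim A_{\mf Q}/\mf p A_{\mf Q}$ is sound. (For what it is worth, the paper itself gives no proof of this lemma; it is quoted from Lipman, so that is the argument you would be measured against.) The genuine gap is the upper bound $\dim A(\mf p) \leq \hght I$, which you leave as a sketch with an admitted hole, and that hole is the real content of the statement. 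A chain of primes of $A(\mf p)$ is a chain of primes of $A$ lying over $\mf p$, and nothing forces it to meet $V(I)$ or to be extendable to a prime containing $I$; hypothesis (1) says nothing about primes not containing $I$, so the strategy ``enlarge the chain until its top contains $I$'' has no handle. This is not a removable technicality: for a general map of Noetherian rings the fiber dimension can strictly exceed $\dim A - \dim R$ (blow-up charts such as $k[x,y] \to k[x,y,u]/(xu-y)$ already do this over the origin), so the upper bound is precisely where the finiteness of $A/I$, the condition $R \cap I = 0$, and hypotheses (1)--(2) must interact, and your write-up never makes them do so.

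Your proposed fallback --- specialize to a maximal ideal $\mf m \supseteq \mf p$, use (2) there, and ``transfer the control back down'' --- also does not come for free. Transferring a bound on the closed fibers $A/\mf m A$ to the fiber over $\mf p$ is an upper semicontinuity statement for local fiber dimension (Chevalley), which requires $A$ to be of finite type over $R$; the lemma as stated assumes only a map of Noetherian rings, with finite type appearing only later in the definition of an affine $I$-family. Even granting finite type, you would need the relevant prime of the $\mf p$-fiber to specialize inside $\Spec A$ to a point lying over a maximal ideal of $R$, and this can fail: a maximal ideal of $A$ need not contract to a maximal ideal of $R$ (for instance $(\pi x - 1) \subset R[x]$ over a discrete valuation ring $(R, (\pi))$ contracts to $(0)$). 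Similarly, your final step ``extract $r \leq \hght I$ from catenarity of $R$'' would need a lower bound such as $\hght \mf Q_0 \geq \hght \mf p$ for the bottom of the chain, which is not available without going-down or flatness. So as written the proposal establishes only $\hght I = \dim A - \dim R$ and $\dim A(\mf p) \geq \hght I$; the reverse inequality still has to be supplied, essentially by reproducing Lipman's argument.
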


Due to the fundamental nature of Lemma~\ref{equi}, we would like 
to call the map $R \to A$ satisfying its assumptions an $I$-family.

\begin{definition}\label{def affine family}
We say that $R \to A$ is an {\it affine $I$-family} if $A$ is a finitely generated $R$-algebra and 
$I \subset A$ is an ideal such that
\begin{enumerate}
\item $A/I$ is a finite $R$-module,
\item $R \cap I = 0$,
\item $\hght \mf q + \dim A/\mf q = \dim A$ for every prime ideal $\mf q \supseteq I$ in $A$,
\item $\dim A/\mf mA + \dim R = \dim A$ for every maximal ideal $\mf m$ of $R$.
\end{enumerate}
\end{definition}
The second condition guarantees that $I(\mf p) \neq A(\mf p)$ for every $\mf p$. 
We can always pass to such a family by factoring by $I \cap R$. 
If $A$ is formally equidimensional then it satisfies (3),
if $A$ is a flat $R$-algebra, then it satisfies (4).
In particular, Example~\ref{ex Monsky} is 
coming from an affine $(x,y,z)$-family: localization does not change  the Hilbert--Kunz multiplicity because the Frobenius powers are $(x,y,z)$-primary. 

\section{Semicontinuity}\label{s main}

We want to show that $\ehk(I(\mf p))$ is an upper semicontinuous function on $\Spec R$ in the 
sense of Definition~\ref{def semi}. In order to build the uniform convergence machinery, we start with auxiliary lemmas.

\begin{lemma}\label{individual}
Let $R \to A$ be a map of rings of characteristic $p > 0$ and $I$ be an ideal in $A$ such that $A/I$ is a finite $R$-module.
For each integer $e \geq 1$ the function $\mf p \to \dim_{k(\mf p)} (A(\mf p)/I(\mf p)^{[p^e]})$
is upper semicontinuous on $\Spec R$.
\end{lemma}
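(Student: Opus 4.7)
The plan is to derive Lemma~\ref{individual} as a direct corollary of Theorem~\ref{t HS}, by observing that the Frobenius power $\frq{I}$ is itself an ideal defining a family of the same type as $I$ — just with larger colength fibers. Concretely, I would write $I = (f_1, \dots, f_n)$ (we are in the Noetherian setting, so $I$ is finitely generated) and set $J := \frq{I} = (f_1^{p^e}, \dots, f_n^{p^e})A$.

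The key technical point, which I expect to be the only non-routine step, is showing that $A/J$ is finite over $R$. For this I would use a simple pigeonhole argument: in any monomial in $f_1, \dots, f_n$ of total degree $n(p^e - 1) + 1$, some $f_i$ must appear with exponent at least $p^e$, so $I^{n(p^e - 1) + 1} \subseteq J$. Hence $A/J$ is a quotient of $A/I^{n(p^e - 1) + 1}$, and the latter is a finite $R$-module by the filtration argument recalled at the opening of the proof of Theorem~\ref{t HS} (iterate the exact sequences $0 \to I^k/I^{k+1} \to A/I^{k+1} \to A/I^k \to 0$, noting that each graded piece is generated over the finite $R$-module $A/I$ by monomials of a fixed degree in the $f_i$).

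Once finiteness is in hand, the conclusion is immediate. Since $R$ has characteristic $p$, so does $A(\mf p)$, and Frobenius powers commute with base change, giving
\[
\frq{I(\mf p)} = (f_1^{p^e} \otimes 1, \dots, f_n^{p^e} \otimes 1) A(\mf p) = J \cdot A(\mf p) = J(\mf p).
\]
Therefore $A(\mf p)/\frq{I(\mf p)} = A(\mf p)/J(\mf p)$, and Theorem~\ref{t HS}(1) applied to $J$ with exponent $n = 1$ directly yields the desired upper semicontinuity of $\mf p \mapsto \dkp A(\mf p)/\frq{I(\mf p)}$ on $\Spec R$. In short, the whole lemma reduces to checking the finiteness of $A/\frq{I}$; everything else is bookkeeping and a single invocation of the general Hilbert--Samuel semicontinuity already established.
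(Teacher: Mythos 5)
Your proposal is correct and follows essentially the same route as the paper: a pigeonhole containment $I^N \subseteq \frq{I}$ (the paper uses $N = hp^e$, you use the sharper $n(p^e-1)+1$) to see that $A/\frq{I}$ is a finite $R$-module, and then the semicontinuity of the minimal number of generators from Theorem~\ref{t HS}. Your explicit check that Frobenius powers commute with base change to $k(\mf p)$ is a detail the paper leaves implicit, but it is the same argument.
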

\begin{proof}
If $I$ can be generated by $h$ elements, then $I^{hp^e} \subseteq \frq{I}$, 
so $A/\frq{I}$ is a finite $R$-module as in Theorem~\ref{t HS}. 
Thus $\mf p \to \dim_{k(\mf p)} (A(\mf p)/I(\mf p)^{[p^e]})$ is the minimal number of generators 
of that module at $\mf p$ and is an upper semicontinuous function.
\end{proof}

\begin{corollary}
Let $R \to A$ be an $I$-family as in Lemma~\ref{equi}. 
Then  for every $\mf p \subseteq \mf q$ we have $\ehk(I(\mf p)) \leq \ehk(I(\mf q))$.
\end{corollary}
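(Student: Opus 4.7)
My plan is to verify the first condition of Nagata's criterion (Remark~\ref{rem Nagata}) for the Hilbert--Kunz multiplicity by combining the two preceding results of this section. Lemma~\ref{equi} will provide dimensional rigidity along specializations, while Lemma~\ref{individual} will supply the termwise inequality $\dim_{k(\mf p)} A(\mf p)/\frq{I(\mf p)} \leq \dim_{k(\mf q)} A(\mf q)/\frq{I(\mf q)}$ for each fixed $e \geq 1$. Normalizing by the same denominator on both sides and passing to the limit will then yield $\ehk(I(\mf p)) \leq \ehk(I(\mf q))$.

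First I would observe that, because $R \to A$ is an $I$-family in the sense of Definition~\ref{def affine family}, its four defining conditions are precisely the hypotheses of Lemma~\ref{equi}. Applying that lemma gives
\[
\dim A(\mf p) = \dim A(\mf q) = \hght I =: d,
\]
so the limits defining $\ehk(I(\mf p))$ and $\ehk(I(\mf q))$ share the common denominator $p^{ed}$. This compatibility is essential: without equidimensionality the numerator inequality could be overwhelmed by different normalizations, and no comparison of the limits would be possible.

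Next, for each fixed $e \geq 1$, Lemma~\ref{individual} asserts that $\mf p \mapsto \dim_{k(\mf p)} A(\mf p)/\frq{I(\mf p)}$ is upper semicontinuous on $\Spec R$. The first condition of Nagata's criterion, applied to this function along the specialization $\mf p \subseteq \mf q$, immediately yields
\[
\dim_{k(\mf p)} A(\mf p)/\frq{I(\mf p)} \;\leq\; \dim_{k(\mf q)} A(\mf q)/\frq{I(\mf q)}.
\]
Dividing both sides by $p^{ed}$ and letting $e \to \infty$ preserves the inequality and gives the conclusion.

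I do not expect a serious obstacle here: the genuine content is already packaged into Lemmas~\ref{equi} and~\ref{individual}, and the role of this corollary is simply to combine them. The one subtle point worth verifying is that the normalized limit $\lim_e \dim_{k(\mf p)} (A(\mf p)/\frq{I(\mf p)})/p^{ed}$ really is the intended $\ehk(I(\mf p))$ even when $A(\mf p)$ fails to be local (which is the typical situation in a family); but since $A(\mf p)/I(\mf p)$ has finite length over $A(\mf p)$, and $A(\mf p)$ is equidimensional of dimension $d$ by Lemma~\ref{equi}, the standard decomposition into contributions from the finitely many maximal ideals of $A(\mf p)$ containing $I(\mf p)$ confirms that the limit exists and coincides with the Hilbert--Kunz multiplicity in the sense used throughout the paper.
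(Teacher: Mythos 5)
Your argument is correct and follows the same route as the paper: Lemma~\ref{equi} gives the common normalization $\dim A(\mf p) = \dim A(\mf q) = \hght I$, and the termwise inequality from the semicontinuity in Lemma~\ref{individual} passes to the limit. Your extra remarks (why semicontinuity yields the inequality along a specialization, and the treatment of non-local fibers) just make explicit what the paper leaves implicit.
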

\begin{proof}
Observe that $\dim A(\mf p)  = \hght I$ by Lemma~\ref{equi}, so we may pass to the limit in Lemma~\ref{individual}.
\end{proof}

\begin{lemma}\label{int closed}
Let $R$ be a Noetherian ring and let $A$ be an intersection flat $R$-algebra, {\it i.e.}, $\cap_{\lambda \in \Lambda} (I_\lambda A) = (\cap_{\lambda \in \Lambda} I_\lambda) A$ 
for arbitrary $\Lambda$ and ideals $I_\lambda \subset R$.
Then for any element $f \in A$ the set
\[
V_R (f) := \{\mf p \in \Spec R \mid f \in \mf pA\}
\]
is closed.
\end{lemma}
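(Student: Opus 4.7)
The plan is to exhibit $V_R(f)$ as the closed set cut out by a single ideal of $R$. Set
\[
J = \bigcap_{\mf p \in V_R(f)} \mf p \subseteq R,
\]
so that $V_R(f) \subseteq V(J)$ tautologically; the remaining task is the reverse inclusion $V(J) \subseteq V_R(f)$.

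The key step is to feed the intersection flatness hypothesis applied to the family $\{\mf p\}_{\mf p \in V_R(f)}$: it gives
\[
JA \;=\; \Bigl(\bigcap_{\mf p \in V_R(f)} \mf p\Bigr) A \;=\; \bigcap_{\mf p \in V_R(f)} \mf p A.
\]
Since by definition $f \in \mf p A$ for every $\mf p \in V_R(f)$, I can conclude from this identity that $f$ already lies in the extended ideal $JA$. This is the only place where the intersection flatness assumption is used, and I expect it to be the crucial move in the argument.

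With $f \in JA$ in hand, the closedness is immediate: if $\mf q \in V(J)$, then $\mf q \supseteq J$, so $\mf q A \supseteq JA \ni f$, which places $\mf q$ in $V_R(f)$. Hence $V_R(f) = V(J)$, which is closed in $\Spec R$.

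There is no real obstacle provided the intersection flatness is exactly as stated (arbitrary indexing set allowed, so that in particular the uncountable intersection of all primes in $V_R(f)$ commutes with extension to $A$); the point of the definition is precisely to make this one-line computation legal. If one wanted to be slightly more careful one could first reduce to the case where $V_R(f)$ is nonempty (otherwise the statement is vacuous, $V_R(f) = \emptyset = V(R)$).
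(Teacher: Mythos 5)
Your proposal is correct and is essentially identical to the paper's own proof: both define the ideal as the intersection of all primes in $V_R(f)$, apply intersection flatness to conclude $f \in JA$, and deduce $V_R(f) = V(J)$. Nothing further to add.
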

\begin{proof}
Let $I$ be the intersection of all primes in $V_R(f)$. 
Then $f \in \cap_{\mf p \in V_R(f)} \mf pA = (\cap_{\mf p \in V_R(f)} \mf p)A = IA$.
Hence $V_R(f) = V(I)$. 
\end{proof}

Last, we record a crucial lemma that provides a uniform upper bound for the main result. Note that polynomial extensions
are intersection flat. 

\begin{lemma}\label{HK bound}
Let $R$ be a Noetherian domain, $A = R[T_1, \ldots, T_d]$, and $I$
be an $(T_1, \ldots, T_d)$-primary ideal.
Let $M$ be a finite $A$-module annihilated by $0 \neq f \in A$.
Then there exists a constant $D$ with the following property: for any $e \geq 0$ and 
$\mf p$ in the open subset $\Spec R \setminus V_R(f)$ with $p := \chr k(\mf p)$
we have
\[
\dkp M(\mf p)/\frq{I}M(\mf p) < D p^{e (d - 1)},
\]
where the characteristic of $k(\mf p)$ may depend on $\mf p$.
\end{lemma}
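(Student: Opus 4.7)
The plan is to bound the colength by reducing to a monomial-ideal calculation that depends only on the support of $f$. The setup has two easy reductions followed by one essential estimate.

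First, since $fM=0$, the module $M$ is a finite $A/(f)$-module, so there is a surjection $(A/(f))^{\oplus r} \twoheadrightarrow M$ for some integer $r$. Tensoring over $R$ with $k(\mf p)$ and killing $\frq{I}$ gives a surjection
\[
\bigl(A(\mf p)/(\bar f, \frq{I})\bigr)^{\oplus r} \twoheadrightarrow M(\mf p)/\frq{I}M(\mf p),
\]
so $\dkp M(\mf p)/\frq{I}M(\mf p) \le r\cdot \dkp A(\mf p)/(\bar f,\frq{I})$. Next, because $I$ is $(T_1,\ldots,T_d)$-primary in the Noetherian ring $A$, we have $(T_1,\ldots,T_d)^N \subseteq I$ for some fixed $N\ge 1$. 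Thus $T_i^{Np^e}=(T_i^N)^{p^e}\in\frq{I}$ for each $i$, so setting $n=Np^e$,
\[
\dkp A(\mf p)/(\bar f,\frq{I}) \le \dkp A(\mf p)/(T_1^n,\ldots,T_d^n,\bar f).
\]

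The essential step is a Gröbner-type bound on this last quantity that is uniform in $\mf p$. Fix the lex monomial order on $A(\mf p) = k(\mf p)[T_1,\ldots,T_d]$. Since $\mf p\notin V_R(f)$ the element $\bar f$ is nonzero, and its initial monomial is some $T^\beta$ whose exponent vector $\beta$ lies in the (finite) support of $f$; in particular each $\beta_i\le D := \max_i \deg_{T_i} f$. Because the $k(\mf p)$-colength of an ideal equals the colength of its initial ideal, and the initial ideal of $(T_1^n,\ldots,T_d^n,\bar f)$ contains the monomial ideal $(T_1^n,\ldots,T_d^n,T^\beta)$, we obtain
\[
\dkp A(\mf p)/(T_1^n,\ldots,T_d^n,\bar f) \le n^d - \prod_{i=1}^{d}(n-\beta_i) \le n^d - (n-D)^d \le dD\,n^{d-1}
\]
by Bernoulli's inequality when $n\ge D$, and the finitely many remaining values $n<D$ are absorbed using the trivial bound $n^d \le D\,n^{d-1}$. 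Substituting $n=Np^e$ and combining with the earlier reductions gives an inequality of the desired form with $D$ depending only on $r$, $N$, $D$, and $d$; one then enlarges $D$ by a constant to obtain the strict inequality.

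The main obstacle is uniformity in $\mf p$: the initial monomial $T^\beta$ of $\bar f$ genuinely varies with $\mf p$, since different coefficients of $f$ may vanish in different residue fields. What saves the argument is that $\beta$ is always constrained to lie in the support of $f$, a finite set independent of $\mf p$, so the degree bound $\beta_i\le D$ and hence the colength estimate are uniform in $\mf p$.
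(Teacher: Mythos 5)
Your proof is correct, and it shares the paper's two preliminary reductions (passing from $M$ to finitely many copies of $A/(f)$, and replacing $\frq{I}$ by the ordinary powers $T_i^{Np^e}$ it contains), but the key uniform estimate is obtained by a genuinely different route. The paper bounds $\dkp (A/fA)(\mf p)/(T_1,\ldots,T_d)^{Ndp^e}$ by invoking its Corollary~\ref{HS bound}, a uniform Hilbert--Samuel bound for the family $R_c \to (A/fA)_c$ whose fibers have dimension at most $d-1$ on $\Spec R \setminus V_R(f)$; that corollary in turn rests on semicontinuity of Hilbert functions in families (Theorem~\ref{t HS}) together with a Noetherian induction, so the constant is structural rather than explicit. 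You instead fix the lex order on $k(\mf p)[T_1,\ldots,T_d]$, observe that the leading monomial $T^\beta$ of the nonzero image $\bar f$ (nonzero precisely because $\mf p \notin V_R(f)$, as $(R/\mf p)[T] \hookrightarrow k(\mf p)[T]$) has exponents bounded by the $T$-degrees of $f$, and count standard monomials of the monomial ideal $(T_1^n,\ldots,T_d^n,T^\beta)$, getting $n^d - \prod_i(n-\beta_i) \leq dD\,n^{d-1}$; uniformity in $\mf p$ comes from the finiteness of the support of $f$ rather than from Noetherian induction. This is more elementary and yields an explicit, characteristic-independent constant (which also suffices for the later application in Theorem~\ref{convergence theorem 2}), at the cost of being tailored to the polynomial-ring setting with a single annihilating element $f$ — exactly the hypotheses of the lemma — whereas the paper's argument reuses machinery (Corollary~\ref{HS bound}) valid for arbitrary families $R \to A$. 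Only cosmetic issues remain: you overload the letter $D$ for both the degree bound and the final constant, and the edge cases ($\beta_i \geq n$, small $n$) are handled correctly but tersely.
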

\begin{proof}
For every maximal ideal $\mf m \notin V_R(f)$ 
\[
\dim A/(f, \mf m)A = \dim R/\mf m [T_1, \ldots, T_d]/(f) \leq d - 1.
\]
Let $N$ be such that $(T_1, \ldots, T_d)^N \subseteq I$.
Then we have inclusions
\[
(T_1, \ldots, T_d)^{Ndp^e} \subseteq ((T_1, \ldots, T_d)^{[p^e]})^N \subseteq \frq{I}.
\]
Suppose that $M$ can be (globally) generated by $\nu$ elements. 
We note that $\Spec R \setminus V_R(f)$ is a finite 
union of principal open set $D(c)$ and
for each $c$ we may apply 
Corollary~\ref{HS bound} to the map $R_c \to A_c$ and
estimate
\begin{align*}
\dkp M(\mf p)/\frq{I}M(\mf p) &\leq \nu \dkp A(\mf p)/\frq{I(\mf p)} 
< \nu C (Nd p^e)^{d - 1}
= (\nu C N^{d-1}d^{d - 1}) p^{e (d - 1)}.
\end{align*}
\end{proof}

\subsection{Main result}

Before proceeding to the proof of the main theorem we recall two lemmas. 
The first is due to Kunz \cite{Kunz2}.

\begin{lemma}\label{Kunz}
Let $R$ be a Noetherian ring of characteristic $p > 0$. 
Then for every $\mf p \subset \mf q$ 
\[
[k(\mf q)^{1/p^e} : k (\mf q)] = p^{e\dim R_\mf q/\mf p}[k(\mf p)^{1/p^e} : k (\mf p)].
\]
\end{lemma}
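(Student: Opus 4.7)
The result is due to Kunz; my plan is to reduce to a formula for a Noetherian local domain and then compute the Hilbert--Kunz multiplicity of the Frobenius pushforward in two different ways. First I would replace $R$ with $A := R_{\mf q}/\mf p R_{\mf q}$, a Noetherian local domain of dimension $d := \dim R_{\mf q}/\mf p$ with residue field $k(\mf q)$, fraction field $k(\mf p)$, and maximal ideal $\mf m$; the statement then becomes an identity comparing $[k(\mf p)^{1/p^e}: k(\mf p)]$ with $[k(\mf q)^{1/p^e}: k(\mf q)]$ through the factor $p^{ed}$.

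Next I would reduce to the case $e = 1$. For any field $F$ of characteristic $p$ the Frobenius yields $[F^{1/p^{i+1}}: F^{1/p^i}] = [F^{1/p}: F]$, hence $[F^{1/p^e}: F] = [F^{1/p}: F]^e$; applying this to both residue fields and raising the $e = 1$ identity to the $e$-th power gives the general case. For $e = 1$, assuming $A$ is F-finite, I would compute $\ehk(\mf m, F_*A)$ in two ways. On the one hand, $F_*A$ is torsion-free of generic rank $[k(\mf p)^{1/p}: k(\mf p)]$, so additivity of Hilbert--Kunz multiplicity in the generic rank of a finitely generated torsion-free module gives
\[
\ehk(\mf m, F_* A) = [k(\mf p)^{1/p}: k(\mf p)] \cdot \ehk(\mf m, A).
\]
On the other hand, the identities $\mf m^{[p^n]}F_*A = F_*(\mf m^{[p^{n+1}]})$ and $\length_A F_*(A/I) = [k(\mf q)^{1/p}: k(\mf q)] \cdot \length_A(A/I)$ for $\mf m$-primary $I$ let one compute the Hilbert--Kunz function directly, yielding
\[
\ehk(\mf m, F_* A) = p^d \cdot [k(\mf q)^{1/p}: k(\mf q)] \cdot \ehk(\mf m, A).
\]
Dividing by $\ehk(\mf m, A) > 0$ delivers the $e = 1$ identity.

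The main obstacle is the non-F-finite case, where both sides may be infinite and the Hilbert--Kunz computation above is not directly available. My preferred remedy is to pass to the $\mf m$-adic completion $\hat A$ and select a minimal prime $P$ with $\dim \hat A/P = d$; by Cohen's structure theorem $\hat A/P$ is module-finite over $\kappa[[x_1, \ldots, x_d]]$ for a coefficient field $\kappa \cong k(\mf q)$, for which the formula is an immediate computation. The delicate point is that completion alters the fraction field, so the comparison with $k(\mf p)$ must be transported through a chain of faithfully flat base changes and finite extensions, using the invariance of the imperfection degree under finite extensions to relate the degree over $k(\mf p)$ to that over $\mathrm{Frac}(\hat A/P)$.
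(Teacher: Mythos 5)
The paper itself gives no argument for this lemma --- it is quoted from Kunz --- so there is nothing internal to compare with; your proposal is in effect a replacement proof, and its core is a genuinely different (and nice) route: instead of Kunz's field-theoretic/fiber argument, you double-count $\ehk(\mf m, F_*A)$ for $A = R_{\mf q}/\mf p R_{\mf q}$. That part is correct and complete in the F-finite case: the reduction to $e=1$ via $[F^{1/p^{i+1}}:F^{1/p^i}]=[F^{1/p}:F]$, the identification of the generic rank of $F_*A$ with $[k(\mf p)^{1/p}:k(\mf p)]$, the identity $\mf m^{[p^n]}F_*A = F_*(\mf m^{[p^{n+1}]})$ together with $\length_A F_*N = [k(\mf q)^{1/p}:k(\mf q)]\,\length_A N$, and division by $\ehk(\mf m,A)\ge 1$ all work, given Monsky's existence theorem and the additivity formula $\ehk(\mf m,M)=\operatorname{rk}(M)\,\ehk(\mf m,A)$ over a domain. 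Note that what your computation proves is $[k(\mf p)^{1/p^e}:k(\mf p)] = p^{e\dim R_{\mf q}/\mf p}\,[k(\mf q)^{1/p^e}:k(\mf q)]$; this is the standard orientation of Kunz's formula and the one actually used in Claim~\ref{dim claim} (check $R=k[x]$, $\mf p=(0)\subset \mf q=(x)$, $k$ perfect) --- the displayed statement of the lemma has $\mf p$ and $\mf q$ transposed.

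The genuine gap is your proposed remedy for the non-F-finite case, and it cannot be repaired, because the statement is false for general Noetherian rings. Concretely: let $k$ be perfect, choose $s_1,s_2,\dots\in k[[t]]$ algebraically independent over $k(t)$, set $K=k(t,s_1,s_2,\dots)\subset k((t))$ and $V=K\cap k[[t]]$. Then $V$ is a DVR (the $t$-adic valuation restricts to a discrete valuation on $K$), so it is Noetherian local of dimension one, its residue field is $k$ with $[k^{1/p}:k]=1$, yet $[K^{1/p}:K]=\infty\neq p$. The same example shows why the completion step cannot be transported back: $\hat V = k[[t]]$ and $[\operatorname{Frac}(\hat V)^{1/p}:\operatorname{Frac}(\hat V)]=p$, so the imperfection degree genuinely changes under completion --- $\operatorname{Frac}(\hat A/P)$ is not a finite or even algebraic extension of $k(\mf p)$, so ``invariance of the imperfection degree under finite extensions'' gives you no handle. (Also note the failure mode is not ``both sides infinite'': here one side is finite and the other is not.) The correct fix is to add the F-finiteness hypothesis to the lemma --- Kunz's theorem carries it, and it is harmless for this paper, where the lemma is only applied to polynomial rings over F-finite fields --- and then your Hilbert--Kunz argument as written is a complete proof.
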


Second, we will need the following form of the Noether normalization theorem from \cite[14.4]{Nagata}.

\begin{theorem}\label{Noether}
Let $R$ be a domain and $A$ be a finitely generated $R$-algebra.
Then there exists an element $0 \neq c \in R$ such that 
$A_c$ is module-finite over a polynomial subring $R_c[z_1, \ldots, z_d]$.
\end{theorem}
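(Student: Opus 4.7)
The plan is to reduce to the classical field version of Noether normalization by passing to the fraction field $K$ of $R$, and then descend back to $R_c$ by clearing finitely many denominators drawn from $K$. First I would write $A = R[y_1, \ldots, y_n]/I$ for some generators $y_1, \ldots, y_n$, so that $A \otimes_R K = K[y_1, \ldots, y_n]/IK[y_1, \ldots, y_n]$ is a finitely generated $K$-algebra.

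Next I would apply the classical Noether normalization over the field $K$ to obtain algebraically independent elements $w_1, \ldots, w_d \in A \otimes_R K$ such that $A \otimes_R K$ is module-finite over $K[w_1, \ldots, w_d]$. Writing each $w_i = a_i/r_i$ with $a_i \in A$ and $0 \neq r_i \in R$, I would replace $w_i$ by $z_i := a_i \in A$; rescaling by a nonzero element of $K$ does not destroy algebraic independence over $K$, and hence not over $R$ either, so that $R[z_1, \ldots, z_d] \subseteq A$ is a genuine polynomial subring.

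Finally, I need $A$ to be module-finite over this polynomial subring after inverting one nonzero element of $R$. By the module-finiteness of $A \otimes_R K$ over $K[w_1, \ldots, w_d] = K[z_1, \ldots, z_d]$, each generator $y_j$ of $A$ over $R$ satisfies an integral equation with coefficients in $K[z_1, \ldots, z_d]$. Letting $c \in R$ be the product of all denominators appearing in the finitely many coefficients of these finitely many equations, each $y_j$ becomes integral over $R_c[z_1, \ldots, z_d]$; therefore $A_c$ is generated as an $R_c[z_1, \ldots, z_d]$-module by a finite set of integral elements and is module-finite.

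The main subtlety---really the only place a careless reader might stumble---is ensuring that after the rescaling in the second step, $z_1, \ldots, z_d$ remain algebraically independent over $R$ so that $R[z_1, \ldots, z_d]$ is honestly a polynomial subring of $A$. This reduces to the fact that $R \hookrightarrow K$ is injective, so any putative polynomial relation with coefficients in $R$ would give the same relation over $K$, contradicting the independence already established there. The rest is the standard ``clear denominators'' trick applied to finitely many integral equations, which is routine.
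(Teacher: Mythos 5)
The paper itself gives no proof of this statement (it is quoted directly from Nagata), so the only question is whether your argument is sound; it has one genuine gap, at the ``clear denominators'' step. The integral equations for the generators $y_j$ are established in $A \otimes_R K$, but the localization map $A_c \to A \otimes_R K$ need not be injective: $A$ is only a finitely generated $R$-algebra and may have $R$-torsion. So from the fact that $P_j(y_j)$ maps to $0$ in $A\otimes_R K$, where $P_j$ is your monic polynomial with coefficients in $R_c[z_1,\dots,z_d]$, you can only conclude that $P_j(y_j)$ is a torsion element of $A_c$, not that it vanishes; integrality of $y_j$ over $R_c[z_1,\dots,z_d]$ inside $A_c$ does not follow. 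Concretely, let $R$ be any domain with a nonzero nonunit $r$ and $A = R[Y]/(rY^2)$. Then $A\otimes_R K = K[Y]/(Y^2)$, the single generator $\bar Y$ satisfies the monic equation $Y^2=0$ whose coefficients already lie in $R$, so your recipe permits $c=1$; yet $Y$ is not integral over $R$ in $A$, and $A$ is not a finite $R$-module (as an $R$-module it is $R \oplus RY \oplus \bigoplus_{k\geq 2} (R/rR)\,Y^k$). The correct $c$ here must be divisible by $r$, and nothing in your choice of $c$ forces that. Your other steps (taking $z_i$ to be numerators of the $w_i$ and descending algebraic independence from $K$ to $R$ via $R\hookrightarrow K$) are fine.

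The repair, under a Noetherian hypothesis --- which covers every application in the paper, where $R$ is regular and F-finite or regular of characteristic $0$ --- is to first invert a nonzero element of $R$ killing the $R$-torsion of $A$: the torsion submodule $T\subseteq A$ is a finitely generated ideal, each generator is annihilated by some nonzero element of $R$, and their product $r_0$ annihilates $T$; one checks that $A_{r_0}$ is then $R_{r_0}$-torsion-free, so $A_{r_0 c}$ embeds into $A\otimes_R K$ and your argument goes through verbatim with $r_0c$ in place of $c$. As stated, however, the theorem allows an arbitrary (possibly non-Noetherian) domain $R$, and there the torsion ideal need not be controllable by one denominator; Nagata's own argument avoids the issue entirely by inducting on the number of algebra generators, using an honest relation $f(y_1,\dots,y_n)=0$ in $A$ (when one exists) and the usual change of variables to make $y_n$ integral over $R_c[y_1',\dots,y_{n-1}']$ after inverting the coefficient $c\in R$ of the pure power of $y_n$, so that every relation used lives in $A$ itself rather than in $A\otimes_R K$.
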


\begin{theorem}\label{convergence thm}
Let $R$ be a regular F-finite ring of characteristic $p > 0$ and 
$R \to A$ be an affine $I$-family with reduced fibers of dimension $h = \hght I$. 
Then there exists 
an open set $U \subseteq \Spec R$ and a constant $D$ 
such that for all $\mf q \in U$  and all $e \geq 1$
\[
\left |\frac{\dkq A(\mf q)/I^{[p^{e+1}]}A(\mf q)}{p^{(e +1)h}}
- 
\frac{\dkq A(\mf q)/I^{[p^{e}]}A(\mf q)}{p^{eh}}\right |
< \frac{D}{p^{e}}.
\]
\end{theorem}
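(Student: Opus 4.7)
The plan is to adapt the uniform convergence machinery of \cite{PolstraSmirnov,semi} to this family setting, tracking every constant so that the estimates hold uniformly across fibers.

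\textbf{Reductions.} Using Theorem~\ref{Noether} and shrinking $R$ by inverting suitable elements, I may assume that $A$ is module-finite over a polynomial subring $B := R[z_1, \ldots, z_h]$ (the number of variables is $h$ because the generic fiber has dimension $h$), that $A$ is reduced (the generic fiber being reduced, the nilradical of $A$ is killed by some nonzero element of $R$), and that there exist $0 \neq c_1 \in B$ and a free $B$-submodule $F \subseteq A$ with $c_1 A \subseteq F$. The ring $B$ is then regular and F-finite.

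\textbf{Discriminant sequences and fiber dimensions.} Apply Corollary~\ref{HH seq} to $B \to A$: for all sufficiently large $e$, one obtains exact sequences $B^{1/p^{e+1}} \otimes_B A \xrightarrow{\alpha_e} F_*(A \otimes_B B^{1/p^e}) \to C_{1,e} \to 0$ and dually for $\beta_e$, with both cokernels annihilated by $c_1 d$. Crucially, $d = \disc_B(A)$ can be taken independent of $e$: it is computed using a fixed $B$-basis of $A$ whose trace form is preserved under the base change $B \to B^{1/p^e}$. Tensoring with $A/\frq{I}$ over $A$ (using $(F_*M)\otimes_A A/J = F_*(M/\frq{J}M)$) and then with $k(\mf q)$ over $R$, and carefully tracking how $F_*$ interacts with fiber specialization, the source on the fiber has $k(\mf q)$-dimension $r_{e+1}\,p^{h(e+1)}\,\ell_e$ (with $\ell_e := \dkq A(\mf q)/\frq{I}A(\mf q)$ and $r_e := \rank_R R^{1/p^e}$), while the target has a comparable dimension involving $\ell_{e+1}$ multiplied by a combination of the rank of $B^{1/p^e}$ and a Frobenius-twist factor which, via Lemma~\ref{Kunz}, simplifies to $r_{e+1}\,p^{he}\,\ell_{e+1}$. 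Hence the difference of source and target dimensions equals $r_{e+1}\,p^{he}\,(p^h\ell_e - \ell_{e+1})$.

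\textbf{Bounding cokernels and conclusion.} Composing $\beta_e\circ\alpha_e$ in the Polstra--Tucker style shows that the kernels and cokernels of both $\alpha_e$ and $\beta_e$ are killed by $(c_1 d)^2$; hence their fiber dimensions are controlled by $\dkq((A/(c_1 d)^2 A)/\frq{I}(A/(c_1 d)^2 A))(\mf q)$, where $A/(c_1 d)^2 A$ is a fixed finite $B$-module. On the open set $U := \Spec R \setminus V_R(c_1 d)$ (open by Lemma~\ref{int closed}), Lemma~\ref{HK bound} applied to this fixed module yields a uniform bound of order $p^{e(h-1)}$, so $|p^h\ell_e - \ell_{e+1}|$ is at most a constant times $p^{e(h-1)}$. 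Dividing by $p^{(e+1)h}$ produces the desired bound $D/p^e$. The main obstacle is the preceding specialization step: carefully verifying via Lemma~\ref{Kunz} that the Frobenius-twist and rank factors combine so that the leading terms collapse correctly. The F-finiteness hypothesis on $R$ is essential here, ensuring $B^{1/p^e}$ is locally free over $B$ of the correct rank for every $e$ simultaneously.
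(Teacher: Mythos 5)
Your overall skeleton (Noether normalization, discriminant exact sequences in the style of Corollary~\ref{cor sequences}, and a uniform Hilbert--Samuel bound from Lemma~\ref{HK bound} on the locus where $c_1d$ is invertible) is the same as the paper's, but there are two genuine gaps. First, you take $d=\disc_B(A)$ and assert it works for every $e$ because the trace form is ``preserved under base change.'' In characteristic $p$ the Noether normalization $B\subseteq A$ need not be generically separable, and if the generic extension is inseparable the trace form is identically zero, so $\disc_B(A)=0$; then $V_R(c_1d)=\Spec R$, your open set $U$ is empty, and the estimate is vacuous (useless for Corollary~\ref{cor semi} and \ref{ucon}, which need a dense open set). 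This is exactly why the paper invokes Lemma~\ref{HH sep}: one must first pass to $S^{1/p^{e_0}}\to A\otimes_S S^{1/p^{e_0}}$ for a fixed large $e_0$, where generic separability holds and the discriminant (of the algebra modulo its torsion, as in Lemma~\ref{HH disc}) is a \emph{nonzero} element; this is not the image of $\disc_B(A)$, and Corollary~\ref{HH seq} is stated with $\disc_{R^{1/p^e}}(A\otimes_R R^{1/p^e})$ for precisely this reason. The paper then uses the sequences only at the fixed levels $e_0,e_0+1$ and tensors them with $A/I^{[p^e]}$ for varying $e$, tracking the extra rank factor $p^{e_0h}[k(\mf q)^{1/p^{e_0}}:k(\mf q)]$ via Lemma~\ref{Kunz}; your version, with sequences at level $e$ and a discriminant alleged to be $e$-independent, does not survive this correction.

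Second, the step you yourself call ``the main obstacle'' is a real gap, not bookkeeping. You build the sequences over $B$ and then tensor with $k(\mf q)$, but Frobenius pushforward does not commute with this specialization: $\mf q\cdot F_*M=F_*(\mf q^{[p]}M)$, so $F_*(A\otimes_B B^{1/p^e})\otimes_R k(\mf q)$ is $F_*$ of a module over the thickening $R/\mf q^{[p]}$, not of the fiber $A(\mf q)$, and its $k(\mf q)$-dimension is not $r_{e+1}p^{he}\ell_{e+1}$; making the leading terms ``collapse correctly'' would require controlling lengths along $\mf q\subset\mf q^{[p]}$, which is of the same order of difficulty as the theorem. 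The paper avoids specializing the maps altogether: it verifies the hypotheses on the fiber (the Claim that $F(\mf q)\subseteq A(\mf q)$ remains a free submodule with cokernel killed by $c$, and that the image of $d$ is a nonzero discriminant of $A(\mf q)\otimes_{S(\mf q)}S(\mf q)^{1/p^{e_0}}$ because $cd\notin\mf qS$) and applies Corollary~\ref{cor sequences} directly to $S(\mf q)\to A(\mf q)$, where the $F_*$-dimension count is clean. A minor further point: your claim that composing $\beta_e\circ\alpha_e$ kills the kernels by $(c_1d)^2$ is neither justified nor needed; right-exactness of tensor together with the two cokernel annihilations already gives the two-sided bound.
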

\begin{proof}
Because $A(0)$ is reduced, after inverting an element of $R$ 
we may assume that $A$ is reduced. Next, by Theorem~\ref{Noether} we invert another element and assume that $A$ is finite over $S = R[T_1, \ldots, T_h]$. 

Applying Lemma~\ref{HH sep} to the pair $S \subseteq A$ we find $e_0$ such that
$S^{1/p^{e_0}} \to A \otimes_S S^{1/p^{e_0}}$ is generically separable.
Since $S$ is a domain, there exists a free module $F \subseteq A$ 
and an element $0 \neq c \in S$ such that $cA \subseteq F$.
Because $S^{1/p^{e_0}}$ is flat, 
$F \otimes_S S^{1/p^{e_0}} \subseteq A \otimes_S S^{1/p^{e_0}}$ 
is a free submodule and $c$ still annihilates the cokernel.
Let $d^{1/p^{e_0}}$ to be the discriminant of 
$A \otimes_S S^{1/p^{e_0}}$ over $S^{1/p^{e_0}}$. 

\begin{claim}
Let $\mf q$ be a prime ideal in the open set $\Spec R \setminus V_R(cd)$.
Then $F(\mf q)$ is a free submodule of $A(\mf q)$ such that $cA(\mf q) \subseteq F(\mf q)$.
\end{claim}
\begin{proof}[Proof of the claim]
We have the induced map $F \otimes_S S(\mf q) \to A \otimes_S S(\mf q)$
whose cokernel is annihilated by the image of $c$ in $S(\mf q)$.
The image of $c$ is nonzero by the assumption, $F_c \cong A_c$, and $c \notin \mf q S$, so $F(\mf q)$ and $A(\mf q)$
are still generically isomorphic as $S(\mf q)$-modules.
Thus, since $F(\mf q)$ is a free $S(\mf q)$-module and 
$S(\mf q) \cong k(\mf q)[T_1, \ldots, T_h]$ is a domain,
the induced map $F \otimes_S S(\mf q) \to A \otimes_S S(\mf q)$ is still an inclusion. 
\end{proof} 

By the functoriality of discriminants (as in \cite[Proposition~2.2]{PolstraSmirnov}), the image of $d$ is still a discriminant of 
$A(\mf q) \otimes_{S(\mf q)} S(\mf q)^{1/p^{e_0}}$ over $S(\mf q)^{1/p^{e_0}}$. 
Since $d \notin \mf q S$, the inclusion is still generically separable. 
Hence, by Lemma~\ref{HH seq}, we have sequences 
\begin{equation}\label{first}
A(\mf q) \otimes_{S(\mf q)} S(\mf q)^{1/p^{e_0 + 1}}  
\to F_* \left (A(\mf q) \otimes_{S(\mf q)} S(\mf q)^{1/p^{e_0}} \right) \to C_1 \to 0
\end{equation}
and 
\begin{equation}\label{second}
F_* \left (A(\mf q) \otimes_{S(\mf q)} S(\mf q)^{1/p^{e_0}} \right ) \to A(\mf q) \otimes_{S(\mf q)} S(\mf q)^{1/p^{e_0 + 1}} \to C_2 \to 0,
\end{equation}
where $cd C_1 = 0 = cd C_2$.
Tensoring these exact sequences with $A/\frq{I}$, we obtain that
\begin{align}\label{bounds}
|\dkq A(\mf q)/\frq{I}A(\mf q) \otimes_{S(\mf q)} S(\mf q)^{1/p^{e_0 + 1}}  
- \dkq A/\frq{I} \otimes_A F_* \left (A(\mf q) \otimes_{S(\mf q)} S(\mf q)^{1/p^{e_0}} \right)|
\\ \notag  \leq \max \left (\dkq C_1/\frq{I}C_1, \dkq C_2/\frq{I}C_2 \right).
\end{align}

\begin{claim}\label{dim claim}
Denote $\alpha (k(\mf q)^e) := p^{eh} [k(\mf q)^{1/e} : k(\mf q)]$.
There is a constant $D$ independent of $\mf q$ such that 
\[
\dkq C_1/\frq{I}C_1, \dkq C_2/\frq{I}C_2
< D p^{e(h-1)} \alpha (k(\mf q)^{p^{e_0 + 1}}).
\]
\end{claim}
\begin{proof}
The exact sequence (\ref{first}) induces a natural surjection on $C_1/\frq{I}C_1$ from
\[
F_* \left (A(\mf q) \otimes_{S(\mf q)} S(\mf q)^{1/p^{e_0}} \right) 
\otimes_A A/(cd, I^{[p^e]})
\cong F_*\left (A(\mf q)/(c^pd^p, I^{[p^{e + 1}]})A(\mf q) \otimes_{S(\mf q)} S(\mf q)^{1/p^{e_0}} \right).
\]
Since $S(\mf q)$ is a polynomial ring of dimension $h$, by
Lemma~\ref{Kunz}
$S(\mf q)^{1/p^{e_0}}$ is a free $S(\mf q)$-module of rank 
$\alpha(k(\mf q)^{p^{e_0}})$.
Then we may bound
\begin{align*}
\dkq C_1/\frq{I}C_1 \leq &\alpha(k(\mf q)^{p^{e_0}}) \dkq F_* (A(\mf q)/(c^pd^p, I^{[p^{e + 1}]}) A(\mf q)) 
=\\& \alpha(k(\mf q)^{p^{e_0}}) [k(\mf q)^{1/p}:k(\mf q)] \dkq A(\mf q)/(c^pd^p, I^{[p^{e + 1}]}) A(\mf q)
=\\& \alpha(k(\mf q)^{p^{e_0 + 1}}) p^{-h} \dkq A(\mf q)/(c^pd^p, I^{[p^{e+1}]})A(\mf q).
\end{align*}
Because $A/I$ is a finite $R$-module, $I$ is $(T_1, \ldots, T_h)$-primary,
so by Corollary~\ref{HK bound} applied to $A/(cd)$ 
we may find a constant $D$
independent of $\mf q$ such that
\[
\dkq (A(\mf q)/(c^pd^p, I^{[p^{e+1}]})A(\mf q))
\leq p \dkq (A(\mf q)/(cd, I^{[p^{e+1}]})A(\mf q))
\leq pDp^{(e+1)(h-1)} = D p^h p^{e(h-1)}, 
\]
thus
$
\dkq (C_1/\frq{I}C_1) < 
D  p^{e(h-1)} \alpha (k(\mf q)^{p^{e_0 + 1}}) .
$

The second bound is similar:
$C_2/\frq{I}$ is an image of $A(\mf q)/(cd, \frq{I})A(\mf q) \otimes_{S(\mf q)} S(\mf q)^{1/p^{e_0 + 1}}$, thus
\[
\dkq (C_2/\frq{I}C_2) \leq \alpha (k(\mf q)^{p^{e_0 + 1}}) \dkq A(\mf q)/(cd, \frq{I})A(\mf q) <
D p^{e(h-1)} \alpha (k(\mf q)^{p^{e_0 + 1}}).
\]
\end{proof}

As in the proof Claim~\ref{dim claim}, we also have
\[
\dkq A(\mf q)/\frq{I}A(\mf q) \otimes_{S(\mf q)} S(\mf q)^{1/p^{e_0 + 1}}  
= \alpha (k(\mf q)^{p^{e_0 + 1}}) 
\dkq A(\mf q)/I^{[p^{e}]}A(\mf q)
\]
and 
\[
\dkq A/\frq{I} \otimes_A F_* \left (A(\mf q) \otimes_{S(\mf q)} S(\mf q)^{1/p^{e_0}} \right)
= p^{-h} \alpha (k(\mf q)^{p^{e_0 + 1}}) 
\dkq A(\mf q)/I^{[p^{e+1}]}A(\mf q).
\]
Now, dividing (\ref{bounds}) by 
$p^{eh} \alpha(k(\mf q)^{p^{e_0 + 1}})$,
from Claim~\ref{dim claim} we obtain that
\[
\left |\frac{\dkq A(\mf q)/I^{[p^{e+1}]}A(\mf q)}{p^{(e +1)h}}
- 
\frac{\dkq A(\mf q)/I^{[p^{e}]}A(\mf q)}{p^{eh}}\right |
< \frac{Dp^{e(h-1)}}{p^{eh}} \leq \frac{D}{p^{e}}.
\]

\end{proof}

\subsection{Families over $\mathbb Z$}
A careful analysis of the proof shows that it can be applied even when the characteristic varies in a family.

\begin{theorem}\label{convergence theorem 2}
Let $R$ be a regular ring of characteristic $0$
and $R \to A$ be an affine $I$-family
with reduced fibers of dimension $h$.
Suppose that for every $\mf p \in \Spec R$
the residue field
$k(\mf p)$ is F-finite whenever it has positive 
characteristic. 
Then there exists 
an open set $U \subseteq \Spec R$ and a constant $D$  
with the following property:
if $\mf q \in U$ and 
$p :=\chr k(\mf q) > 0$
then
\[
\left |\frac{\dkq A(\mf q)/I^{[p^{e+1}]}A(\mf q)}{p^{(e +1)h}}
- 
\frac{\dkq A(\mf q)/I^{[p^{e}]}A(\mf q)}{p^{eh}}\right |
< \frac{D}{p^{e}}.
\]
Note that $p$, the characteristic of $k(\mf q)$, may vary in the family and $D$ 
is independent of $p$. 
\end{theorem}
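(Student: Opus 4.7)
The plan is to run the proof of Theorem~\ref{convergence thm} essentially verbatim, with one key substitution: the Frobenius bump of degree $e_0$ introduced there (via Lemma~\ref{HH sep}) to force generic separability of $S \subseteq A$ is unavailable when $R$ has characteristic zero, and must be replaced by the automatic separability of a finite extension of a characteristic-zero field.

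I would begin with the same geometric reductions, each implemented by inverting a single element of $R$: we may assume that $A$ is reduced (since its generic fiber is), that $A$ is module-finite over a polynomial subring $S = R[T_1,\ldots,T_h]$ by Theorem~\ref{Noether}, and that there is an element $c \in S$ and a free $S$-submodule $F \subseteq A$ with $cA \subseteq F$. Fix a free basis $e_1,\ldots,e_n$ of $F$, which is automatically a basis of $\mathrm{Frac}(A)$ over $\mathrm{Frac}(S)$, and define $d = \det(\Trace(e_ie_j)) \in S$. Because $R$ has characteristic zero, $\mathrm{Frac}(A)/\mathrm{Frac}(S)$ is separable, so $d$ is nonzero.

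The crucial point is that the nonzero elements $c, d \in S = R[T_1,\ldots,T_h]$ each have at least one nonzero coefficient in $R$. Letting $r \in R$ be the product of two such coefficients and setting $U = \Spec R \setminus V(r)$, for any $\mf q \in U$ with $\chr k(\mf q) = p > 0$ the images $\bar c, \bar d \in S(\mf q)$ remain nonzero. The reduction $\bar F$ is a free $S(\mf q)$-submodule of $A(\mf q)$ with $\bar c A(\mf q) \subseteq \bar F$, and by functoriality of discriminants $\bar d$ serves as a nonzero discriminant of $S(\mf q) \to A(\mf q)$. Consequently the fiber extension $S(\mf q) \to A(\mf q)$ is already generically separable, so Corollary~\ref{cor sequences} applies directly---this is essentially the ``$e_0 = 0$'' case of Corollary~\ref{HH seq}---producing maps $\alpha\colon A(\mf q) \otimes_{S(\mf q)} S(\mf q)^{1/p} \to F_* A(\mf q)$ and $\beta\colon F_* A(\mf q) \to A(\mf q) \otimes_{S(\mf q)} S(\mf q)^{1/p}$ with cokernels annihilated by $\bar c \bar d$. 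Tensoring with $A(\mf q)/\frq{I}A(\mf q)$, invoking Lemma~\ref{Kunz} to identify the rank of $S(\mf q)^{1/p}$ over $S(\mf q)$ as $\alpha(k(\mf q)^p)$, and dividing by $p^{eh}\alpha(k(\mf q)^p)$ reproduces the computation of Theorem~\ref{convergence thm} and delivers the asserted inequality.

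The main obstacle is ensuring that the constant $D$ is genuinely independent of $p$. Following the argument of Claim~\ref{dim claim}, the cokernel dimension is dominated by $\dkq A(\mf q)/(c^p d^p,\frq{I})A(\mf q)$, which Lemma~\ref{HK bound}---applied to the $S$-module $A/(cd)$ with $S = R[T_1,\ldots,T_h]$ and annihilator $f = cd$---bounds by some multiple of $p^{e(h-1)}$. A reading of the proof of Lemma~\ref{HK bound} together with Corollary~\ref{HS bound} shows that the constant depends only on the generator count of the module and on combinatorial data of $I$ inside $R[T_1,\ldots,T_h]$, none of which involves $p$, so $D$ is uniform across the fibers in $U$. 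The F-finiteness hypothesis on $k(\mf q)$ intervenes only to guarantee that $S(\mf q)^{1/p}$ remains module-finite over $S(\mf q)$ on each positive-characteristic fiber, so that $\alpha(k(\mf q)^p)$ is finite and the rank computation above makes sense.
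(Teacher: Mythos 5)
Your proposal is correct and follows essentially the same route as the paper: since the generic fiber has characteristic zero, the Noether normalization $S \subseteq A$ is automatically generically separable, so one runs the proof of Theorem~\ref{convergence thm} with $e_0 = 0$, and the constant $D$ is characteristic-independent because it comes from the Hilbert--Samuel bound of Lemma~\ref{HK bound}. Your only (harmless) deviation is choosing the open set $U$ as a principal open set $D(r)$ defined by a product of nonzero coefficients of $c$ and $d$, rather than the complement of $V_R(cd)$ via Lemma~\ref{int closed}; both guarantee that $\bar c,\bar d$ stay nonzero in $S(\mf q)$, which is all the argument needs.
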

\begin{proof}
After inverting an element if necessary, 
we choose a Noether normalization $S = R_f[x_1, \ldots, x_d]$ of $A_f$.
Note that $S \subseteq A$ is generically separable, because $A(0)$ has characteristic $0$. 
So, we may proceed with the proof of Theorem~\ref{convergence thm} with $e_0 = 0$.
The constant $D$ in claim 
Claim~\ref{dim claim} comes from Lemma~\ref{HK bound} and does not depend on characteristic as it arises from the Hilbert--Samuel theory. 
\end{proof}

\begin{corollary}\label{ucon}
Let $R \to A$ be an affine $I$-family with reduced fibers of dimension $h$.
Suppose that for every $\mf p \in \Spec R$
the residue field $k(\mf p)$ is F-finite whenever it has positive 
characteristic ({\it e.g.,} $R$ is F-finite or $R = \mathbb Z$). 
Then there exists 
an open set $U \subseteq \Spec R$ and a constant $D$  
with the following property:
if $\mf q \in U$ and 
$p :=\chr k(\mf q) > 0$
then
\[
\left |\ehk(I(\mf p)) - 
\frac{\dkq A(\mf q)/I^{[p^{e}]}A(\mf q)}{p^{eh}}\right |
< \frac{2D}{p^{e}}.
\]
\end{corollary}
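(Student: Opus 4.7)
The plan is to deduce this corollary from Theorem~\ref{convergence theorem 2} by a standard telescoping argument, identifying $\ehk(I(\mf q))$ as the limit of the sequence whose consecutive differences we have just bounded. First I would fix the open set $U$ and the constant $D$ furnished by Theorem~\ref{convergence theorem 2}, so that for any $\mf q \in U$ with $p = \chr k(\mf q) > 0$ we have
\[
\left|\frac{\dkq A(\mf q)/I^{[p^{e+1}]}A(\mf q)}{p^{(e+1)h}} - \frac{\dkq A(\mf q)/I^{[p^e]}A(\mf q)}{p^{eh}}\right| < \frac{D}{p^e}
\]
for every $e \geq 1$. Since the fibers have dimension $h$ (guaranteed by the $I$-family structure via Lemma~\ref{equi}, together with the reducedness assumption), the sequence $a_e := \dkq A(\mf q)/I^{[p^e]}A(\mf q)/p^{eh}$ converges to $\ehk(I(\mf q))$ by Monsky's theorem.

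Next, for any $N \geq 1$ I apply the triangle inequality to the telescoping sum
\[
|a_{e+N} - a_e| \leq \sum_{k=0}^{N-1} |a_{e+k+1} - a_{e+k}| < \sum_{k=0}^{N-1} \frac{D}{p^{e+k}} < \frac{D}{p^e}\cdot \frac{1}{1 - 1/p} = \frac{D}{p^e}\cdot \frac{p}{p-1}.
\]
Letting $N \to \infty$ and using $a_{e+N} \to \ehk(I(\mf q))$ yields
\[
\left|\ehk(I(\mf q)) - \frac{\dkq A(\mf q)/I^{[p^e]}A(\mf q)}{p^{eh}}\right| \leq \frac{D}{p^e}\cdot\frac{p}{p-1}.
\]
Since every prime $p \geq 2$, the factor $p/(p-1) \leq 2$, giving the desired bound $2D/p^e$ uniformly for all $\mf q \in U$ with positive residue characteristic.

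There is essentially no obstacle here beyond verifying that the hypotheses of Theorem~\ref{convergence theorem 2} do indeed apply (which is automatic from the assumptions) and that the limit defining $\ehk(I(\mf q))$ exists and equals the limit of the sequence $a_e$; the latter is Monsky's theorem applied fiberwise, using that each fiber is Noetherian of dimension $h$ with $A(\mf q)/I(\mf q)$ of finite length so that the Frobenius powers $I^{[p^e]}A(\mf q)$ are of finite colength. The uniformity in $p$ is exactly what Theorem~\ref{convergence theorem 2} provides, and the geometric series bound turns it into the uniform estimate claimed.
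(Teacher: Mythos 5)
Your telescoping argument is exactly the one the paper relies on (it simply outsources it to the proof of \cite[Lemma~3.5]{PolstraTucker}), and your reading of the misprinted $\ehk(I(\mf p))$ as $\ehk(I(\mf q))$ is the intended one. The genuine gap is the sentence claiming that the hypotheses of Theorem~\ref{convergence theorem 2} hold ``automatically from the assumptions.'' They do not: both Theorem~\ref{convergence thm} and Theorem~\ref{convergence theorem 2} assume that the base $R$ is \emph{regular}, and each is stated for a base of pure characteristic ($p>0$ and F-finite, respectively characteristic $0$), whereas Corollary~\ref{ucon} assumes neither regularity of $R$ nor anything about its characteristic beyond F-finiteness of the positive-characteristic residue fields. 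So ``fix the open set $U$ and the constant $D$ furnished by Theorem~\ref{convergence theorem 2}'' is not a legitimate first step when, for instance, $R$ is a non-regular F-finite domain of characteristic $p$, or is not even reduced. The paper's proof consists almost entirely of the reduction you skipped: pass to $R/\mf p \to A/\mf p$ so that the base is a domain (this is also the form in which the corollary is applied later via Nagata's criterion, where one only needs an open subset of $V(\mf p)$), use that an F-finite ring is excellent so the regular locus is open, invert an element to assume $R$ regular, and only then invoke Theorem~\ref{convergence thm} or Theorem~\ref{convergence theorem 2} according to whether the now-fixed characteristic of the base is $p>0$ or $0$. Relatedly, you cite only the characteristic-$0$ theorem; when $R$ itself has characteristic $p>0$ you must use Theorem~\ref{convergence thm} instead.

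Two minor points. First, your appeal to Monsky's theorem on each fiber is fine, but it is worth saying (as you half do) that $\dim A(\mf q) = h$ comes from Lemma~\ref{equi}, i.e.\ from the $I$-family axioms, so that the normalization $p^{eh}$ is the correct one. Second, for $p=2$ the passage to the limit in your geometric-series estimate yields $\leq 2D/p^e$ rather than a strict inequality; this is cosmetic (enlarge $D$ slightly, or keep the non-strict bound), but as written the final ``$<$'' is not quite justified.
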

\begin{proof}
We may pass to $R/\mf p \to A/\mf p$ and assume that $\mf p =0$.
An F-finite ring is excellent (\cite[Theorem~2.5]{Kunz2}),
so the regular locus of $R$ is open and, by  inverting an element, we assume that $R$ is regular. 

Let $D$ be the constant provided by Theorem~\ref{convergence thm}
or Theorem~\ref{convergence theorem 2},
then the claim follows from the proof of \cite[Lemma~3.5]{PolstraTucker}.
\end{proof}

\begin{corollary}\label{cor semi}
Let $R$ be an F-finite ring of characteristic $p > 0$ and
$R \to A$ be an affine $I$-family with reduced fibers. 
Then the function $\mf p \mapsto \ehk (I(\mf p))$ 
is upper semicontinuous on $\Spec R$.
\end{corollary}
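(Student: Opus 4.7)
The plan is to apply Nagata's criterion of openness (Remark~\ref{rem Nagata}) to the function $f\colon \Spec R \to \mathbb R$ given by $f(\mf p) = \ehk(I(\mf p))$. Condition (1) of the criterion — monotonicity along specialization $\mf p \subseteq \mf q \Rightarrow \ehk(I(\mf p)) \leq \ehk(I(\mf q))$ — has already been established in the corollary following Lemma~\ref{individual}. It therefore remains to verify condition (2): given $\mf p \in \Spec R$ and $\lambda \in \mathbb R$ with $\ehk(I(\mf p)) < \lambda$, produce $s \notin \mf p$ such that $\ehk(I(\mf q)) < \lambda$ for every $\mf q \in V(\mf p) \cap D(s)$.

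Fix such $\mf p$ and $\lambda$. Replacing $R \to A$ by $R/\mf p \to A/\mf pA$, one may assume $\mf p = 0$; the passage to the quotient preserves being an affine $I$-family with reduced fibers of the same dimension $h$, because for $\mf q \supseteq \mf p$ the fibers $(A/\mf pA)(\mf q) = A(\mf q)$ are unchanged. Since a quotient of an F-finite ring is F-finite, Corollary~\ref{ucon} supplies an open set $U \subseteq \Spec R/\mf p = V(\mf p)$ automatically containing the generic point $\mf p$, together with a constant $D$, such that
\[
\left|\ehk(I(\mf q)) - \frac{\dkq A(\mf q)/I^{[p^e]}A(\mf q)}{p^{eh}}\right| < \frac{2D}{p^e}
\]
for every $\mf q \in U$ and every $e \geq 1$.

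Because $\dkp A(\mf p)/I^{[p^e]}A(\mf p)/p^{eh} \to \ehk(I(\mf p)) < \lambda$ as $e \to \infty$, choose a single $e$ large enough that
\[
\frac{\dkp A(\mf p)/I^{[p^e]}A(\mf p)}{p^{eh}} + \frac{2D}{p^e} < \lambda.
\]
By Lemma~\ref{individual}, the function $\mf q \mapsto \dkq A(\mf q)/I^{[p^e]}A(\mf q)$ is upper semicontinuous on $\Spec R$, so there is an open $V \ni \mf p$ on which $\dkq A(\mf q)/I^{[p^e]}A(\mf q) \leq \dkp A(\mf p)/I^{[p^e]}A(\mf p)$. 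For any $\mf q \in U \cap V$, combining this bound with the uniform convergence estimate at our fixed $e$ yields $\ehk(I(\mf q)) < \lambda$. Picking $s \notin \mf p$ with $D(s) \cap V(\mf p) \subseteq U \cap V$ completes the verification.

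The only technical point that needs care is the book-keeping in the reduction to $\mf p = 0$: the open neighborhood from Corollary~\ref{ucon} lives only inside $V(\mf p)$ rather than in $\Spec R$, but this is harmless since Nagata's condition (2) only tests primes $\mf q \supseteq \mf p$. The rest of the argument is a standard \emph{bound + uniform convergence + upper semicontinuity of individual terms} sandwich, where the actual content has already been packaged into Corollary~\ref{ucon} and Lemma~\ref{individual}.
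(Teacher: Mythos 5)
Your proof is correct and is essentially the paper's own argument: the paper's proof of Corollary~\ref{cor semi} simply invokes the uniform-convergence-plus-semicontinuous-individual-terms mechanism of \cite{PolstraTucker, semi}, and your write-up spells out exactly that mechanism via Nagata's criterion, the monotonicity corollary after Lemma~\ref{individual}, the fixed-$e$ term from Lemma~\ref{individual}, and the estimate of Corollary~\ref{ucon} applied over $R/\mf p$ (the same reduction the paper performs inside the proof of Corollary~\ref{ucon}). No substantive differences to report.
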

\begin{proof}
We use uniform convergence to pass semicontinuity from the individual term to the limit as in \cite{PolstraTucker, semi}.
Each individual term, $\dkp A(\mf p)/\frq{I}A(\mf p)$ is the number of generators of $A/\frq{I}$ at $\mf p$ and, thus,
is naturally upper semicontinuous.
\end{proof}

We have the following geometric consequence. 

\begin{corollary}
Let $X \to T$ together with a section $\sigma \colon T \to X$
be a flat family of finite type with reduced fibers over a variety $T$ of characteristic $p > 0$. 
Then the function $t \mapsto \ehk (\sigma(t), X_t)$ 
is upper semicontinuous on $T$.
\end{corollary}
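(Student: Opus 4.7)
The plan is to deduce the statement directly from Corollary~\ref{cor semi} by realizing the geometric family locally as an affine $I$-family. Since upper semicontinuity is a local property on $T$, I would fix a point $t_0 \in T$, choose an affine open $U = \Spec A$ of $X$ containing $\sigma(t_0)$, and use continuity of $\sigma$ to replace $T$ by an affine open $\Spec R \subseteq \sigma^{-1}(U)$ so that $\sigma(T) \subseteq U$. Replacing $X$ by $U$ is harmless because $\ehk(\sigma(t), X_t)$ depends only on the local ring at $\sigma(t)$. The section $\sigma$ then corresponds to an $R$-algebra retraction $A \twoheadrightarrow R$, and its kernel $I$ satisfies $A/I \cong R$. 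Consequently, for every $\mf p \in \Spec R$ with corresponding point $t$, the fiber $A(\mf p)/I(\mf p) = k(\mf p)$ is a field, so $I(\mf p)$ is the maximal ideal of $A(\mf p)$ cutting out $\sigma(t)$, and $\ehk(\sigma(t), X_t) = \ehk(I(\mf p), A(\mf p))$.

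The next step is to verify that $R \to A$ is an affine $I$-family in the sense of Definition~\ref{def affine family}. Conditions (1) and (2) are immediate from $A/I \cong R$ and from the fact that the composition $R \hookrightarrow A \twoheadrightarrow R$ is the identity, which forces $R \cap I = 0$. For conditions (3) and (4) I would further shrink $T$ so that it is irreducible and the flat finite-type morphism $X \to T$ has equidimensional fibers of constant relative dimension $h$; this is possible by generic constancy of fiber dimension for flat morphisms of finite type. Then $A$ is equidimensional, and since $A$ is a finitely generated algebra over a field it is catenary, which yields (3); the flat dimension formula $\dim A = \dim R + h$ applied at closed points of $\Spec R$ gives (4). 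Reducedness of the fibers is given by hypothesis.

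Because $T$ is a variety over an F-finite field (for example a perfect field), $R$ is F-finite of characteristic $p > 0$, so Corollary~\ref{cor semi} applies and yields upper semicontinuity of $\mf p \mapsto \ehk(I(\mf p), A(\mf p))$ on $\Spec R$. Pulling back along the identification $\Spec R \hookrightarrow T$ gives upper semicontinuity of $t \mapsto \ehk(\sigma(t), X_t)$ on an open neighborhood of $t_0$, and covering $T$ by such neighborhoods finishes the proof. The main obstacle I expect is bookkeeping around the equidimensionality conditions (3) and (4): flatness alone is not enough, but after restricting $T$ to the open dense locus where fibers have constant dimension and $A$ is equidimensional, they follow from catenarity and the flat dimension formula, and since we only need semicontinuity (not constancy) this shrinking is harmless.
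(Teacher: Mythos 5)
Your overall route is the intended one: the paper states this corollary without a separate proof, as an immediate consequence of Corollary~\ref{cor semi}, and your reduction (localize on $T$ and on $X$ around $\sigma(t_0)$, turn the section into an $R$-algebra retraction $A \twoheadrightarrow R$ with kernel $I$, note $A/I \cong R$ gives conditions (1)--(2), $R$ is F-finite, the fibers are reduced, and $\ehk(\sigma(t),X_t)=\ehk(I(\mf p),A(\mf p))$) is exactly how one is meant to connect the geometric statement to the affine result. Note, though, that even this last identification silently uses that $\dim \mathcal O_{X_t,\sigma(t)}$ agrees with the dimension used to normalize $\ehk(I(\mf p),A(\mf p))$, i.e.\ equidimensionality of the fiber at the section point; in the paper this is precisely what Lemma~\ref{equi} extracts from conditions (3)--(4).

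The genuine gap is your treatment of (3)--(4). First, ``generic constancy of fiber dimension'' only controls the maximal dimension of the fibers: it neither makes the fibers equidimensional after shrinking (the constant family $T \times Y$ with $Y$ a plane union a line has reduced, non-equidimensional fibers over \emph{every} point) nor makes $A$ equidimensional. Second, and more seriously, shrinking to a \emph{dense open} of $T$ is not harmless: upper semicontinuity must be verified at every point, so you may only shrink to a neighborhood of the fixed $t_0$ (as in your first step); a $t_0$ at which equidimensionality along the section fails cannot be discarded, and no inversion of elements of $R$ removes it since $I \cap R = 0$. Some hypothesis really is needed here: for $X = V\bigl((a+b-t)ab,\ (a+b-t)c\bigr)\subseteq \mathbb A^1_t\times \mathbb A^3_{a,b,c}$ with section $\sigma(t)=(t,0,0,0)$, the family is flat, of finite type, with all fibers reduced, yet $\mathcal O_{X_t,\sigma(t)}$ is a node (dimension $1$, $\ehk=2$) for $t\neq 0$, while at $t=0$ the section lands on the two-dimensional plane component and $\ehk(\sigma(0),X_0)=1$; the function jumps down at the special point, and condition (3) of Definition~\ref{def affine family} fails at $\mf q = I$ for every localization of $R=k[t]$. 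So you must read into (or add to) the statement that the fibers are equidimensional of constant dimension $h$ --- under that hypothesis your argument for (3) and (4) does go through, since flatness forces every irreducible component of $X$ to dominate $T$, hence to have dimension $\dim T + h$, and catenarity of finite-type algebras plus the flat dimension formula finish the verification --- but it cannot be manufactured by generic shrinking, and without it the reduction to Corollary~\ref{cor semi} does not close.
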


The following corollary provides a positive answer 
to the question of Claudia Miller from \cite{BrennerLiMiller} and recovers the main 
result, \cite[Corollary~3.3]{BrennerLiMiller}.
A much more general result about reductions mod $p$ was announced in \cite{PerezTuckerYao}. 
For a family of geometrically integral graded rings and $e \geq h - 1$
an affirmative answer was recently obtained by Trivedi in \cite[Corollary~1.2]{Trivedi2}.

\begin{corollary}\label{Miller q}
Let $\mathbb Z \to R$ be an affine $I$-family 
with reduced fibers of dimension $h$. 
Then for every $e \geq 1$ 
\[
\lim_{p \to \infty} \left(\ehk(IR(p)) - \frac{\length(R(p)/I(p)^{[p^e]})}{p^{eh}} \right) = 0.
\]
\end{corollary}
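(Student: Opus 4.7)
The plan is essentially to read this as a direct specialization of Corollary~\ref{ucon} to the base ring $\mathbb Z$. Since $\mathbb Z$ is regular of characteristic zero and every residue field $k((p)) = \mathbb F_p$ at a nonzero prime is finite (hence F-finite), the hypotheses of Corollary~\ref{ucon} are satisfied by the family $\mathbb Z \to R$. Also note that $\Spec\mathbb Z$ has a unique minimal prime $\mf p = (0)$, so the localization step ``pass to $R/\mf p \to A/\mf p$ and assume $\mf p = 0$'' in the proof of Corollary~\ref{ucon} does nothing here.

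Concretely, I would first invoke Corollary~\ref{ucon} to obtain an open subset $U \subseteq \Spec\mathbb Z$ and a constant $D$ such that for every $\mf q \in U$ with $p := \chr k(\mf q) > 0$ and every $e \geq 1$,
\[
\left|\,\ehk(IR(p)) - \frac{\length\bigl(R(p)/I(p)^{[p^e]}\bigr)}{p^{eh}}\,\right| < \frac{2D}{p^e}.
\]
Here I have used that for $\mf q = (p) \neq 0$ the residue field is $\mathbb F_p$, so $\dkq$ is the same as ordinary length, and $R \otimes_{\mathbb Z} k(\mf q) = R(p)$.

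Since $U \subseteq \Spec\mathbb Z$ is nonempty open, its complement is a finite set of closed points, hence $U$ contains $(p)$ for all but finitely many primes $p$. For such $p$ the right-hand side $2D/p^e$ tends to $0$ as $p \to \infty$, yielding
\[
\lim_{p \to \infty}\left(\ehk(IR(p)) - \frac{\length\bigl(R(p)/I(p)^{[p^e]}\bigr)}{p^{eh}}\right) = 0,
\]
which is the desired conclusion.

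There is no real obstacle: all the work is already inside Corollary~\ref{ucon}, whose strength is precisely that the convergence constant $D$ does not depend on the residue characteristic. The only point worth being careful about is checking that the hypotheses of Corollary~\ref{ucon} truly apply with base $\mathbb Z$ (regularity, F-finiteness of all positive characteristic residue fields, and the affine $I$-family assumption, which is given), and that after identifying $k((p)) = \mathbb F_p$ the dimension term in the statement of Corollary~\ref{ucon} becomes the ordinary length appearing in Miller's question.
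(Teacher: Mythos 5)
Your proposal is correct and follows essentially the same route as the paper: the paper's proof likewise invokes Corollary~\ref{ucon} to get the bound $2D/p^e$ for all sufficiently large $p$ and lets $p \to \infty$. Your extra remarks (that the complement of $U$ in $\Spec \mathbb Z$ is finite, that $k((p)) = \mathbb F_p$ so $\dk{\mf q}$ is ordinary length, and that the minimal prime is $(0)$) are just the details the paper leaves implicit.
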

\begin{proof}
By Corollary~\ref{ucon}, we obtain that for all 
sufficiently large $p$
\[
\left |\ehk(IR(p)) - \frac{\length(R(p)/I(p)^{[p^e]})}{p^{eh}} \right| < \frac{2D}{p^{e}}.
\]
and the theorem follows.
\end{proof}

\subsection{F-rational signature}
In \cite{HochsterYao} 
Hochster and Yao introduced the following definition.

\begin{definition}
Let $(R, \mf m)$ be a local ring. 
The {\it F-rational signature} of $R$
is defined as 
\[
\rsig(R) = \inf_{u} \left\{ \ehk(\ul{x}) - \ehk(\ul{x}, u)\right\}
\]
where the infimum is taken over all systems
of parameters $\ul{x}$ and socle elements $u$.
\end{definition}

In \cite[Theorem~2.5]{HochsterYao}, it was shown that one can fix an arbitrary $\ul{x}$ in the definition.

\begin{proposition}\label{prop rat sig}
Let $k$ be a field of characteristic $p>0$, $R$ be a finitely generated $k$-algebra, and $\mf m$ be a maximal ideal of $R$.
Then the infimum in the definition of $\rsig(R_{\mf m})$ is achieved.
\end{proposition}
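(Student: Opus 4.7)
The plan is to realize the ideals $(\ul x, u)$ as fibers of an affine $I$-family in the sense of Definition~\ref{def affine family} and apply the semicontinuity from Corollary~\ref{cor semi}, then invoke a descending-chain argument on the Noetherian base to show the supremum of $\ehk(\ul x, u)$ is attained. By \cite[Theorem~2.5]{HochsterYao}, I may fix a single system of parameters $\ul x$ with representatives $x_i \in R$ and lift a $k(\mf m)$-basis $u_1, \ldots, u_N \in R$ of the socle $((\ul x):_{R_\mf m}\mf m)/(\ul x)$. Every nonzero socle element then has the form $\sum_i t_i u_i$ modulo $(\ul x)$ for some $\bar t \in k(\mf m)^N \setminus \{0\}$, so the socle elements are parametrized by an open subset of $\mathbb A^N_{k(\mf m)}$.

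To produce the family, invert an element $s \in R \setminus \mf m$ so that $V(\ul x) \cap D(s) = \{\mf m\}$, which is possible because $\mf m$ is minimal over $(\ul x)$ in $R_\mf m$; then $R_s/(\ul x)$ is Artinian and finite-dimensional over $k(\mf m)$. Assuming $k(\mf m)$ is algebraically closed and reducing to $k = k(\mf m)$, consider
\[
B := k[T_1, \ldots, T_N] \longrightarrow A := R_s[T_1, \ldots, T_N]
\]
with ideal $I = (\ul x, u_T) \subset A$, where $u_T = \sum T_i u_i$. The quotient $A/I$ is a finite $B$-module; the fibers at $k$-rational closed points $\bar t$ recover $R_s$ with ideal $(\ul x, \sum t_i u_i)$, whose Hilbert--Kunz multiplicity agrees with that in $R_\mf m$ by the choice of $s$; and, on restricting the base to avoid the trivial fiber at the origin and any loci where equidimensionality or reducedness fails, Definition~\ref{def affine family} is satisfied with F-finite base $B$ and reduced fibers. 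Corollary~\ref{cor semi} then gives that $\bar t \mapsto \ehk(I(\bar t))$ is upper semicontinuous on this base.

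Finally, any upper semicontinuous function bounded above on a Noetherian topological space attains its supremum: the descending chain of nonempty closed sets $Z_n = \{f \geq \sup f - 1/n\}$ stabilizes by the descending chain condition, and on the stable set $f = \sup f$; the Nullstellensatz ensures it contains a closed point. Applied here, this produces a $k$-rational closed point $\bar t_0$ with $\ehk(\ul x, \sum t_{0,i} u_i) = \sup_u \ehk(\ul x, u) = \ehk(\ul x) - \rsig(R_\mf m)$, yielding the desired socle element. The main obstacle is in the construction of the family: one must thread between the non-finite generation of $R_\mf m$ over $k$ and the requirement of finite-length fibers in Definition~\ref{def affine family}, which is why the localization separating $\mf m$ from the other components of $V(\ul x)$ is essential.
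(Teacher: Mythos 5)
Your proof is essentially the paper's: parametrize the candidate socle generators by an affine family over a polynomial base, invoke Corollary~\ref{cor semi} for upper semicontinuity of $\mf p \mapsto \ehk(I(\mf p))$, and finish with a Noetherian chain argument (the paper uses ACC on the open sets $\{f < \ehk(\ul x, u_n)\}$ for a hypothetical strictly increasing sequence, you use DCC on closed superlevel sets --- equivalent). The differences are minor: the paper uses two $(N-1)$-parameter charts $(\ul x, T_1u_1+\cdots+T_{N-1}u_{N-1}+u_N)$ and $(\ul x, u_1+T_2u_2+\cdots+T_Nu_N)$ rather than one $N$-parameter chart with the origin removed, while your localization at $s$ isolating $\mf m$ in $V(\ul x)$ and your explicit algebraically-closed residue field assumption make precise points the paper leaves implicit (the introduction indeed states the result under that hypothesis).
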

\begin{proof}
First of all, we observe that the assertion is equivalent to showing that 
$\ehk(\ul{x}, u)$ has a maximum where $\ul{x}$ is a system of parameter and $u$ varies through socle elements.

Let $u_1, \ldots, u_N$ be a basis of $(\ul{x}):\mf m/(\ul{x})$ as a $k$-vector space. 
We may parametrize the socle ideals $(I, u)$ via two affine families: $(\ul{x}, T_1u_1 + \cdots + T_{N-1}u_{N-1} + u_N)$-family
\[
S := k[T_1, \ldots, T_{N-1}] \to
R[T_1, \ldots, T_{N-1}]
\]
and, similarly, for $u_1 + T_2u_2 + \cdots + T_Nu_N$.
By Corollary~\ref{cor semi}, the function 
$f\colon \mf p \mapsto 
\ehk((\ul{x}, T_1u_1 + \cdots + T_{N-1}u_{N-1} + u_N)R(\mf p))$
is upper semicontinuous on $\Spec S$.

The claim now follows since an upper semicontinuous function satisfies an ascending chain condition.
Namely, let $u_n$ be a sequence of socle elements such that 
$\ehk(\ul{x}, u_n) < \ehk(\ul{x}, u_{n+1})$ for all $n$.
Without loss of generality we may assume that $u_n$ correspond to maximal ideals $\mf m_n$ of $S$.
Then $U_n = \{\mf p \in \Spec S \mid f(\mf p) < \ehk(\ul{x}, u_n) )\}$ is an increasing family of open sets
which cannot stabilize because $\mf m_n \in U_{n+1}\setminus U_n$. 
Since $\Spec S$ is Noetherian, this is a contradiction.
\end{proof}

\begin{remark}
We want to note that Proposition~\ref{prop rat sig}
can be also applied when $R$ is
given as a quotient of a power series
ring by an ideal generated by polynomials, since the lengths do not change under completion. 
By Artin's celebrated result (\cite[Theorem~(3.8)]{Artin})
this gives us that the conclusion holds for complete isolated singularities with a perfect residue field. 
\end{remark}

As a consequence, we recover 
a special case of 
\cite[Theorem~4.1]{HochsterYao}.

\begin{corollary}
Let $k$ be a field of characteristic $p>0$, $R$ be a finitely generated $k$-algebra, 
and $\mf m$ be a maximal ideal of $R$.
Then $\rsig(R_{\mf m}) > 0$ if and only if $R_{\mf m}$ is F-rational.
\end{corollary}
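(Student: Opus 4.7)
The approach combines Proposition~\ref{prop rat sig} with the classical Hochster--Huneke comparison of Hilbert--Kunz multiplicity and tight closure \cite{HochsterHuneke1}: in an excellent, reduced, equidimensional local ring, for $\mf m$-primary ideals $(\ul{x}) \subseteq J$ one has $\ehk(\ul{x}) = \ehk(J)$ if and only if $J \subseteq (\ul{x})^*$. The ring $R_{\mf m}$ is excellent as a localization of a finitely generated $k$-algebra, and the remaining hypotheses of reducedness and equidimensionality will be supplied by F-rationality itself in the only direction where the hard half of the comparison is needed.

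For ($\Leftarrow$): assume $R_{\mf m}$ is F-rational; in particular it is Cohen--Macaulay and reduced. Fix any system of parameters $\ul{x}$; by \cite[Theorem~2.5]{HochsterYao} the infimum defining $\rsig$ may be taken over socle elements $u$ modulo $(\ul{x})$ alone, and Proposition~\ref{prop rat sig} ensures that this infimum is attained by some socle element $u_0$. F-rationality gives $(\ul{x})^* = (\ul{x})$, so $u_0 \notin (\ul{x})^*$, and the hard direction of the comparison forces $\ehk(\ul{x}, u_0) < \ehk(\ul{x})$, hence $\rsig(R_{\mf m}) > 0$. For ($\Rightarrow$): by contrapositive, suppose $R_{\mf m}$ is not F-rational and choose a system of parameters $\ul{y}$ with $(\ul{y})^* \supsetneq (\ul{y})$. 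The nonzero finite-length submodule $(\ul{y})^*/(\ul{y}) \subseteq R_{\mf m}/(\ul{y})$ has nonzero socle, producing a socle element $u \in (\ul{y})^* \setminus (\ul{y})$; the easy half of the comparison (immediate from $(I^*)^{[p^e]} \subseteq (I^{[p^e]})^*$ and the definition of tight closure) yields $\ehk(\ul{y}, u) = \ehk(\ul{y})$, so the pair $(\ul{y}, u)$ already witnesses $\rsig(R_{\mf m}) = 0$.

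The only genuinely nontrivial external ingredient is Hochster--Huneke's comparison; everything else is short bookkeeping once Proposition~\ref{prop rat sig} upgrades the infimum to a minimum. The main point I want to highlight is that, without this upgrade, the implication F-rational $\Rightarrow \rsig(R_{\mf m}) > 0$ would instead require the uniform lower bounds developed in \cite[Theorem~4.1]{HochsterYao}; the novelty of the present proof is that the semicontinuity machinery of this paper removes the need for such bounds entirely.
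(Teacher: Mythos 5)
Your proposal is correct and follows essentially the same route as the paper: the paper's one-line proof is exactly the Hochster--Huneke equivalence between $\ehk(\ul{x}) = \ehk(\ul{x},u)$ and $u \in (\ul{x})^*$ (whose hypotheses are indeed supplied by excellence and by F-rationality in the nontrivial direction), combined with Proposition~\ref{prop rat sig} turning the infimum into a minimum. You merely spell out the two directions and the socle-element reduction that the paper leaves implicit.
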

\begin{proof}
Recall that $R$ is F-rational if $\ul{x}$ is tightly closed or, equivalently, that $\ehk(\ul{x}) > \ehk(\ul{x}, u)$ for every socle element $u$.
\end{proof}

\begin{remark}
A variation of the Hochster--Yao definition, relative F-rational signature, was proposed in \cite{SmirnovTucker}
\[
\csig(R) = 
\inf_{\ul{x} \subset I} \frac{ \ehk(\ul{x}) - \ehk(I)}{\length (R/\ul{x}) - \length (R/I)},
\]
where the infimum is taken over all $\mf m$-primary ideals $I$ containing 
a system of parameters $\ul{x}$.
The paper shows that the definition also does not depend on the choice of $\ul{x}$
and that it might have better properties than $\rsig(R)$. 

By considering higher degree Grassmannians of $(\ul{x}):\mf m/(\ul{x})$, from the proof of  Proposition~\ref{prop rat sig} we may also get that the relative F-rational signature is a minimum. 
\end{remark}

\section{Questions}

\subsection{Nilpotents}
Like the preceding work \cite{PolstraSmirnov}, 
this paper has to assume that the family is reduced because
of the lack of control in non-reduced rings. 
While Hilbert--Kunz multiplicity exists for non-reduced rings, 
the original proof in \cite{Monsky} and its extensions pass
to $\red{R}$ by observing that $F_*^{e_0} R$ 
is an $\red{R}$-module for large $e_0$. 
This is not satisfactory for two reasons: 
the approach via discriminants does not adapt for modules
and we do not see how to control the exponent $e_0$. 

\subsection{F-signature}
F-signature is a measure of singularity in positive characteristic 
introduced by Huneke and Leuschke in \cite{HunekeLeuschke}.
Due to similarities between the two theories, it is natural to ask  whether the results of this paper extend to F-signature.

A related statement was observed in \cite[Theorem~4.9]{CarvajalSchwedeTucker},
however, it does not give lower semicontinuity since $A$ is assumed to be of finite type 
over a field and cannot be localized to apply Nagata's criterion. 
In fact, F-signature is not lower semicontinuous in families, because an example of Singh shows that 
strong F-regularity is not open (\cite{Singh}, also see \cite{DeStefaniSmirnov}).

\subsection{Localization of tight closure}
As it was mentioned above, in \cite{BrennerMonsky} Brenner and Monsky showed that tight closure does not localize. 
However, we do not understand the underlying reasons.
In particular, how does it relate to the results of
\cite{HochsterHuneke3} 
and how typical is this phenomenon?
As \cite{BrennerMonsky} depends on an irregular behavior 
of Hilbert--Kunz multiplicity in a family, we hope that it should be possible to give a 
general procedure for producing counter-examples 
from such families, for example, the family in \cite{MonskyFamily}. 
The study of Hilbert--Samuel multiplicity in families
was pioneered by Teissier (\cite{Teissier}) to give a criterion of equimultiplicity: $\eh(I(\mf p))$ is independent of $\mf p$
if and only if $\ell(I) = \hght (I)$.
The author suspects that a study of equimultiplicity in families
for Hilbert--Kunz multiplicity might explain the phenomenon presented in \cite{BrennerMonsky}.

\section*{Acknowledgements}
I thank Mel Hochster for helping to shape Corollary~\ref{cor sequences}, 
Javier Carvajal-Rojas and Thomas Polstra for discussions, 
Yongwei Yao for informing me about the results in \cite{PerezTuckerYao}, 
and  the anonymous referee for well-thought-out comments that greatly improved the quality of this paper.

\bibliographystyle{alpha}
\bibliography{bib}

\begin{thebibliography}{Mon98b}

\bibitem[Art69]{Artin}
M.~Artin.
\newblock Algebraic approximation of structures over complete local rings.
\newblock {\em Inst. Hautes \'{E}tudes Sci. Publ. Math.}, (36):23--58, 1969.

\bibitem[BB]{BrennerBatsukh}
Holger Brenner and Bayarjargal Batsukh.
\newblock Hilbert-{K}unz multiplicity of binoids.
\newblock Preprint, available at https://arxiv.org/abs/1710.05761.

\bibitem[BC97]{BuchweitzChen}
Ragnar-Olaf Buchweitz and Qun Chen.
\newblock Hilbert-{K}unz functions of cubic curves and surfaces.
\newblock {\em J. Algebra}, 197(1):246--267, 1997.

\bibitem[BLM12]{BrennerLiMiller}
Holger Brenner, Jinjia Li, and Claudia Miller.
\newblock A direct limit for limit {H}ilbert-{K}unz multiplicity for smooth
  projective curves.
\newblock {\em J. Algebra}, 372:488--504, 2012.

\bibitem[BM10]{BrennerMonsky}
Holger Brenner and Paul Monsky.
\newblock Tight closure does not commute with localization.
\newblock {\em Ann. of Math. (2)}, 171(1):571--588, 2010.

\bibitem[Bru05]{Bruns}
Winfried Bruns.
\newblock Conic divisor classes over a normal monoid algebra.
\newblock In {\em Commutative algebra and algebraic geometry}, volume 390 of
  {\em Contemp. Math.}, pages 63--71. Amer. Math. Soc., Providence, RI, 2005.

\bibitem[Con96]{Conca}
Aldo Conca.
\newblock Hilbert-{K}unz function of monomial ideals and binomial
  hypersurfaces.
\newblock {\em Manuscripta Math.}, 90(3):287--300, 1996.

\bibitem[CRST]{CarvajalSchwedeTucker}
Javier Carvajal-Rojas, Karl Schwede, and Kevin Tucker.
\newblock Bertini theorems for {F}-signature and {H}ilbert–-{K}unz
  multiplicity.
\newblock Preprint, available at \url{http://arxiv.org/abs/1710.01277}.

\bibitem[DSS]{DeStefaniSmirnov}
Alessandro De~Stefani and Ilya Smirnov.
\newblock Stability and deformation of {F}-singularities.
\newblock Preprint, available at https://arxiv.org/abs/2002.00242.

\bibitem[ES05]{EnescuShimomoto}
Florian Enescu and Kazuma Shimomoto.
\newblock On the upper semi-continuity of the {H}ilbert-{K}unz multiplicity.
\newblock {\em J. Algebra}, 285(1):222--237, 2005.

\bibitem[Eto02]{Eto}
Kazufumi Eto.
\newblock Multiplicity and {H}ilbert-{K}unz multiplicity of monoid rings.
\newblock {\em Tokyo J. Math.}, 25(2):241--245, 2002.

\bibitem[FM00]{FlennerManaresi}
Hubert Flenner and Mirella Manaresi.
\newblock Equimultiplicity and equidimensionality of normal cones.
\newblock In {\em Recent progress in intersection theory ({B}ologna, 1997)},
  Trends Math., pages 199--215. Birkh\"auser Boston, Boston, MA, 2000.

\bibitem[GM]{GesselMonsky}
Ira~M. Gessel and Paul Monsky.
\newblock The limit as $p \to \infty$ of the {H}ilbert-{K}unz multiplicity of
  $\sum x_i^{d_i}$.
\newblock Preprint, available at https://arxiv.org/abs/1007.2004.

\bibitem[HH90]{HochsterHuneke1}
Melvin Hochster and Craig Huneke.
\newblock Tight closure, invariant theory, and the {B}rian\c con-{S}koda
  theorem.
\newblock {\em J. Amer. Math. Soc.}, 3(1):31--116, 1990.

\bibitem[HH00]{HochsterHuneke3}
Melvin Hochster and Craig Huneke.
\newblock Localization and test exponents for tight closure.
\newblock {\em Michigan Math. J.}, 48:305--329, 2000.
\newblock Dedicated to William Fulton on the occasion of his 60th birthday.

\bibitem[HL02]{HunekeLeuschke}
Craig Huneke and Graham~J. Leuschke.
\newblock Two theorems about maximal {C}ohen-{M}acaulay modules.
\newblock {\em Math. Ann.}, 324(2):391--404, 2002.

\bibitem[HY]{HochsterYao}
Melvin Hochster and Yongwei Yao.
\newblock The {F}-rational signature and drops in the {H}ilbert-{K}unz
  multiplicity.
\newblock Preprint.

\bibitem[Kun69]{Kunz1}
Ernst Kunz.
\newblock Characterizations of regular local rings of characteristic {$p$}.
\newblock {\em Amer. J. Math.}, 91:772--784, 1969.

\bibitem[Kun76]{Kunz2}
Ernst Kunz.
\newblock On {N}oetherian rings of characteristic {$p$}.
\newblock {\em Amer. J. Math.}, 98(4):999--1013, 1976.

\bibitem[Lip82]{Lipman}
Joseph Lipman.
\newblock Equimultiplicity, reduction, and blowing up.
\newblock In {\em Commutative algebra ({F}airfax, {V}a., 1979)}, volume~68 of
  {\em Lecture Notes in Pure and Appl. Math.}, pages 111--147. Dekker, New
  York, 1982.

\bibitem[Mat80]{Matsumura}
Hideyuki Matsumura.
\newblock {\em Commutative algebra}, volume~56 of {\em Mathematics Lecture Note
  Series}.
\newblock Benjamin/Cummings Publishing Co., Inc., Reading, Mass., second
  edition, 1980.

\bibitem[Mon83]{Monsky}
Paul Monsky.
\newblock The {H}ilbert-{K}unz function.
\newblock {\em Math. Ann.}, 263(1):43--49, 1983.

\bibitem[Mon97]{MonskyCubic}
Paul Monsky.
\newblock The {H}ilbert-{K}unz function of a characteristic {$2$} cubic.
\newblock {\em J. Algebra}, 197(1):268--277, 1997.

\bibitem[Mon98a]{MonskyQL}
Paul Monsky.
\newblock Hilbert-{K}unz functions in a family: line-{$S_4$} quartics.
\newblock {\em J. Algebra}, 208(1):359--371, 1998.

\bibitem[Mon98b]{MonskyQP}
Paul Monsky.
\newblock Hilbert-{K}unz functions in a family: point-{$S_4$} quartics.
\newblock {\em J. Algebra}, 208(1):343--358, 1998.

\bibitem[Mon05]{MonskyFamily}
Paul Monsky.
\newblock On the {H}ilbert-{K}unz function of {$z^D-p_4(x,y)$}.
\newblock {\em J. Algebra}, 291(2):350--372, 2005.

\bibitem[Mon11]{MonskyCubic2}
Paul Monsky.
\newblock Hilbert-{K}unz theory for nodal cubics, via sheaves.
\newblock {\em J. Algebra}, 346:180--188, 2011.

\bibitem[Nag62]{Nagata}
Masayoshi Nagata.
\newblock {\em Local rings}.
\newblock Interscience Tracts in Pure and Applied Mathematics, No. 13.
  Interscience Publishers a division of John Wiley \& Sons\, New York-London,
  1962.

\bibitem[Par94]{Pardue}
Keith Pardue.
\newblock Nonstandard borel-fixed ideals, 1994.
\newblock Thesis (Ph.D.)--Brandeis University.

\bibitem[PS]{PolstraSmirnov}
Thomas Polstra and Ilya Smirnov.
\newblock Continuity of {H}ilbert--{K}unz multiplicity and {F}-signature.
\newblock {\em Nagoya Math. J.}
\newblock Accepted, available at \url{http://arxiv.org/abs/1707.04366}.

\bibitem[PT18]{PolstraTucker}
Thomas Polstra and Kevin Tucker.
\newblock {$F$}-signature and {H}ilbert-{K}unz multiplicity: a combined
  approach and comparison.
\newblock {\em Algebra Number Theory}, 12(1):61--97, 2018.

\bibitem[PTY]{PerezTuckerYao}
Felipe P{\'e}rez, Kevin Tucker, and Yongwei Yao.
\newblock Uniformity in reduction to characteristic $p$.
\newblock In preparation.

\bibitem[SB79]{ShepherdBarron}
Nicholas Shepherd-Barron.
\newblock On a problem of {E}rnst {K}unz concerning certain characteristic
  functions of local rings.
\newblock {\em Arch. Math. (Basel)}, 31(6):562--564, 1978/79.

\bibitem[Sin99]{Singh}
Anurag~K. Singh.
\newblock {$F$}-regularity does not deform.
\newblock {\em Amer. J. Math.}, 121(4):919--929, 1999.

\bibitem[Smi16]{semi}
Ilya Smirnov.
\newblock Upper semi-continuity of the {H}ilbert-{K}unz multiplicity.
\newblock {\em Compos. Math.}, 152(3):477--488, 2016.

\bibitem[Smi19]{equi}
Ilya Smirnov.
\newblock Equimultiplicity in {Hilbert}-{Kunz} theory.
\newblock {\em Math. Z}, 291(1-2):245--278, 2019.

\bibitem[ST]{SmirnovTucker}
Ilya Smirnov and Kevin Tucker.
\newblock Towards the theory of {F}-rational signature.
\newblock Preprint, available at https://arxiv.org/abs/1911.02642.

\bibitem[Tei80]{Teissier}
Bernard Teissier.
\newblock R{\' e}solution simultan{\' e}e, {II}.
\newblock In {\em S\'{e}minaire sur les Singularit{\' e}s des Surfaces}, volume
  777 of {\em Lecture Notes in Math.}, pages 82--146. Springer, Berlin, 1980.

\bibitem[Tri07]{Trivedi}
V.~Trivedi.
\newblock Hilbert-{K}unz multiplicity and reduction mod {$p$}.
\newblock {\em Nagoya Math. J.}, 185:123--141, 2007.

\bibitem[Tri19]{Trivedi2}
Vijaylaxmi Trivedi.
\newblock Toward {H}ilbert-{K}unz density functions in characteristic 0.
\newblock {\em Nagoya Math. J.}, 235:158--200, 2019.

\bibitem[Tuc12]{Tucker}
Kevin Tucker.
\newblock {$F$}-signature exists.
\newblock {\em Invent. Math.}, 190(3):743--765, 2012.

\bibitem[Wat00]{Watanabe}
Kei-ichi Watanabe.
\newblock Hilbert-{K}unz multiplicity of toric rings.
\newblock {\em Proc. Inst. Nat. Sci. (Nihon Univ.)}, 35:173--177, 2000.

\end{thebibliography}

\end{document}